\documentclass{amsart}
\usepackage{amsfonts,amsmath,amsthm,amssymb}

\newtheorem{theorem}{Theorem}
\newtheorem{corollary}{Corollary}
\newtheorem{lemma}{Lemma}

\def\gp#1{\langle #1 \rangle}
\def\m1{^{-1}}

\begin{document}

\title{On the unit group of a commutative group ring}

\author{V. Bovdi and M. Salim}
\dedicatory{Dedicated to  Professor  W.~Kimmerle  on his  60th birthday}

\address{
Department of Math. Sciences,
UAE University - Al-Ain,
\newline
United Arab Emirates
}
\email{vbovdi@gmail.com; MSalim@uaeu.ac.ae}

\thanks{This  paper was supported by PPDNF at UAEU}

\subjclass{Primary: 16S34, 16U60; Secondary:
20C05}
\keywords{group algebra,  unitary unit, symmetric unit}

\begin{abstract}
We investigate  the group of normalized units of the group algebra $\mathbb{Z}_{p^e}G$
of a finite  abelian $p$-group $G$ over the ring $\mathbb{Z}_{p^e}$  of residues   modulo $p^e$
with $e\geq 1$.
\end{abstract}
\maketitle

\section{Introduction}

Let $V(RG)$ be the group of normalized units of the group ring $RG$  of  a finite abelian $p$-group $G$ over  a commutative ring $R$ of characteristic $p^e$ with   $e\geq 1$.  It is well known (\cite{Bovdi_book}, Theorem 2.10, p.10) that   $V(RG)=1+\omega(RG)$, where
\[
\omega(RG)=\big\{\sum_{g \in G}a_gg \in RG \quad \vert \quad\sum_{g\in G}a_g=0\big\}
\]
is the  augmentation ideal of $RG$.

In the case when $\mathrm{char}(R)=p$ and $G$ is an arbitrary finite (not necessary abelian)  $p$-group, the structure of  $V(RG)$
has been studied by  several authors (see the survey \cite{Bovdi_survey}). For a finite abelian $p$-group  $G$,  the  invariants and the basis of $V(\mathbb{Z}_{p}G)$ has been  given by B.~Sandling (see \cite{Sandling_II}). In general, when $\mathrm{char}(R)=p^e$ with $e\geq 2$,  the structure of the abelian $p$-group $V(RG)$ is still not understood.

In the present paper we  investigate the   invariants of  $V(RG)$ in the case when $R=\mathbb{Z}_{p^e}$ is the  ring of residues  modulo $p^e$.
The question about the bases of $V(\mathbb{Z}_{p^e}G)$ is
left open. Our research can be considered as a natural  continuation of  results  of R.~Sandling.

Note that the investigation  of the group $V(\mathbb{Z}_{p^e}G)$
was started by  F.~Raggi (see for example, \cite{Raggi}). We shall revisit his work done in \cite{Raggi} in order to get a more transparent description of the group $V(\mathbb{Z}_{p^e}G)$.

Several results concerning $RG$ and  $V(RG)$ have found  applications in coding theory, cryptography and threshold logic (see
\cite{Bovdi_threshold, Anokhin, Hurley_II, Hurley_I, Willems}).

\section{Main results}

We start to study $V(\mathbb{Z}_{p^e}G)$ with the description of
its  elements of order $p$.  It is easy to see that if  $z\in
\omega(RG)$ and  $c\in G$ is of order $p$, then    $c +p^{e-1}z$
is a nontrivial unit of order $p$ in $\mathbb{Z}_{p^e}G$. We can
ask whether the converse is true, namely  that every element of
order $p$ in $V(\mathbb{Z}_{p^e}G)$ has   the form $c +p^{e-1}z$,
where $z\in \omega(RG)$ and  $c\in G$ of order $p$. The first
result gives  an affirmative  answer to this question.

\begin{theorem}\label{T:1}
Let $V(\mathbb{Z}_{p^e}G)$ be the group of normalized units of the
group ring $\mathbb{Z}_{p^e}G$  of  a finite abelian $p$-group
$G$, where $e\geq 1$. Then every unit $u\in V(\mathbb{Z}_{p^e}G)$
of order $p$ has a form $u=c+p^{e-1}z$, where $c\in G[p]$ and
$z\in \omega(\mathbb{Z}_{p^e}G)$. Moreover,
\[
V(\mathbb{Z}_{p^e}G)[p]= G[p] \times \big(1+p^{e-1}\omega(\mathbb{Z}_{p^e}G)\big),
\]
where the order of the elementary  $p$-group $1+p^{e-1}\omega(\mathbb{Z}_{p^e}G)$ is $p^{|G|-1}$.
\end{theorem}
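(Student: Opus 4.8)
The plan is to establish the displayed equality by proving the two inclusions, the triviality of the intersection, and the order statement separately; the description $u=c+p^{e-1}z$ of the order‑$p$ units falls out on the way. I assume $e\ge 2$ throughout (for $e=1$ the first assertion is trivial with $c=1$, and that is the only case needed).

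I would first clear away the bookkeeping. If $c\in G[p]$ and $z\in\omega(\mathbb Z_{p^e}G)$, the binomial expansion of $(c+p^{e-1}z)^p$ has first‑order term $p^ec^{p-1}z$ and all later terms divisible by $p^{k(e-1)}$ with $k\ge2$, so $(c+p^{e-1}z)^p=c^p=1$ and its augmentation is $1$; this gives "$\supseteq$". If $1\ne c\in G[p]$ were in $1+p^{e-1}\omega(\mathbb Z_{p^e}G)$, then $c-1\in p^{e-1}\mathbb Z_{p^e}G$, impossible as $c-1$ has a coefficient $\pm1$ (here $e\ge2$ enters); so the intersection is trivial. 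Finally $z\mapsto p^{e-1}z$ induces $\omega(\mathbb Z_{p^e}G)/p\,\omega(\mathbb Z_{p^e}G)\xrightarrow{\sim}p^{e-1}\omega(\mathbb Z_{p^e}G)$, the source being additively $\cong\omega(\mathbb F_pG)$ of order $p^{|G|-1}$, while $(1+p^{e-1}x)(1+p^{e-1}y)=1+p^{e-1}(x+y)$ makes $1+p^{e-1}\omega(\mathbb Z_{p^e}G)$ elementary abelian.

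For "$\subseteq$" I would reduce to the claim $(\ast_e)$: \emph{if $u\in V(\mathbb Z_{p^e}G)$, $u^p=1$, then the image $\pi_{e-1}(u)$ in $\mathbb Z_{p^{e-1}}G$ is trivial (lies in $G$).} Granting this, $u\equiv c\pmod{p^{e-1}}$ with $c\in G$; writing $u=c+p^{e-1}y$, the binomial computation above gives $u^p=c^p$, so $c\in G[p]$, and $y$ can be corrected modulo $p\,\mathbb Z_{p^e}G$ (which does not change $p^{e-1}y$) to lie in $\omega(\mathbb Z_{p^e}G)$, giving the stated form. Then $(\ast_e)$ is proved by induction on $e\ge2$: for $(\ast_e)\Rightarrow(\ast_{e+1})$, from $u\in V(\mathbb Z_{p^{e+1}}G)$ with $u^p=1$, applying $(\ast_e)$ to $\pi_e(u)$ and the previous paragraph yields $c\in G[p]$ with $u\equiv c\pmod{p^{e-1}}$; then $c^{-1}u=1+p^{e-1}y'$ has $(c^{-1}u)^p=1$, and $(1+p^{e-1}y')^p\equiv1+p^ey'\pmod{p^{e+1}}$ — except for $p=2,\ e=2$, where one instead gets $1+4y'(1+y')$, forcing $\overline{y'}(1+\overline{y'})=0$ in $\mathbb F_2G$ and hence $\overline{y'}=0$, since $\epsilon(y')\equiv0\pmod2$ makes $1+\overline{y'}$ a unit — so in all cases $y'\equiv0\pmod p$, i.e.\ $u\equiv c\pmod{p^e}$, which is $(\ast_{e+1})$.

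The crux is the base case $(\ast_2)$: \emph{$u^p=1$ in $V(\mathbb Z_{p^2}G)$ $\Rightarrow$ $\overline u\in G[p]\subseteq\mathbb F_pG$.} Writing $u=\beta+p\gamma$ with digit coefficients one has $u^p\equiv\beta^p\pmod{p^2}$, and since this depends only on $\overline u$ it defines a homomorphism $F\colon V(\mathbb F_pG)\to V(\mathbb Z_{p^2}G)$, $\overline\beta\mapsto\beta^p$; the claim is $\ker F=G[p]$. Here $G[p]\subseteq\ker F$, and $\pi_1\circ F$ being the $p$‑th power map of $V(\mathbb F_pG)$ gives $\ker F\subseteq V(\mathbb F_pG)[p]$; moreover $V(\mathbb F_pG)[p]=1+\omega(\mathbb F_pG,G[p])$, the relative augmentation ideal being the kernel of the $\mathbb F_p$‑linear map $g\mapsto g^p$ on $\mathbb F_pG$, of dimension $|G|-|G^p|$. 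On $V(\mathbb F_pG)[p]$, $F$ lands in $1+p\,\omega(\mathbb Z_{p^2}G)\cong\omega(\mathbb F_pG)$ and becomes the "logarithmic" homomorphism $1+\overline x\mapsto\overline{\big((1+x)^p-1\big)/p}$, which reduces mod $p$ to the truncated logarithm $\overline x-\tfrac12\overline x^2+\cdots+\tfrac{(-1)^{p-2}}{p-1}\overline x^{p-1}$ corrected by the well‑defined term $\overline{x^p/p}$. The main obstacle is to show this map is injective modulo $G[p]$ — equivalently that $|V(\mathbb Z_{p^2}G)^p|=p^{|G|-1-\operatorname{rk}(G)}$ — and this is exactly where I would work through Raggi's computations in detail, e.g.\ reducing to cyclic $G$ via $\mathbb Z_{p^2}(H\times C)=(\mathbb Z_{p^2}H)[C]$.
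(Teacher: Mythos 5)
Your reduction scheme is sensible, and most of it checks out: the inclusion $G[p]\cdot\big(1+p^{e-1}\omega(\mathbb{Z}_{p^e}G)\big)\subseteq V(\mathbb{Z}_{p^e}G)[p]$, the triviality of the intersection, the order count $p^{|G|-1}$, and the inductive step $(\ast_e)\Rightarrow(\ast_{e+1})$ --- including your separate handling of $p=2$, $e=2$ via invertibility of $1+\overline{y'}$ --- are all correct. The problem is that the entire content of the theorem has been funnelled into the base case $(\ast_2)$ (that $u^p=1$ in $V(\mathbb{Z}_{p^2}G)$ forces $\overline{u}\in G[p]$, not merely $\overline{u}\in 1+\mathfrak{I}(G[p])=V(\mathbb{F}_pG)[p]$), and you do not prove it: you reformulate it as $\ker F=G[p]$, equivalently as a count of $|V(\mathbb{Z}_{p^2}G)^p|$, and then explicitly defer the verification to ``Raggi's computations.'' Since everything above $e=2$ follows by soft lifting, $(\ast_2)$ \emph{is} the theorem, so as it stands the proof is incomplete rather than merely terse.

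For what it is worth, the paper closes exactly this gap by a quantitative filtration argument rather than by a direct analysis of your map $F$: after the same reduction to $u\in 1+\mathfrak{I}(G[p])$ modulo $p$ (its Lemma~\ref{L:4}), it invokes Sandling's description of the dimension subgroups $\mathfrak{D}_n(\mathbb{Z}_{p^2}G)$ (Lemma~\ref{L:3}) to stratify the elements $h\in G[p]$ by which power of $\omega(\mathbb{Z}_{p^2}G)$ contains $h-1$, and then expands $u^p=1$ successively modulo $\omega^2,\omega^3,\dots$; at each stage the surviving linear term $p\sum_i c_i\sum_h\beta_h^{(i)}(h-1)$ must die modulo the next power of $\omega$, which eliminates the coefficients attached to that stratum, and nilpotency of $\omega$ terminates the descent. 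Some input of this kind --- dimension subgroups, or the explicit computation of $|V(\mathbb{Z}_{p^2}G)^p|$ that you correctly identify as the equivalent missing fact --- is unavoidable, and supplying it is what would turn your outline into a proof.
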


A full description of  $V(\mathbb{Z}_{p^e}G)$ is given by the next theorem.

\begin{theorem}\label{T:2}
Let $V(\mathbb{Z}_{p^e}G)$ be the group of normalized units of the group ring $\mathbb{Z}_{p^e}G$  of  a finite
abelian $p$-group $G$ with  $\mathrm{exp}(G)=p^n$ where  $e\geq 1$. Then
\[
\begin{split}
V(\mathbb{Z}_{p^e}G)&=G\times \mathfrak{L}(\mathbb{Z}_{p^e}G),\\
\mathfrak{L}(\mathbb{Z}_{p^e}G)&\cong  lC_{p^{e-1}}\times \Big(\times_{i=1}^{n} s_{i} C_{p^{d+e-1}}\Big),\\
\end{split}
\]
where   the nonnegative integer  $s_i$ is equal to the difference of
\[
|G^{p^{i-1}}|-2|G^{p^{i}}|+|G^{p^{i+1}}|
\]
and the number of cyclic subgroups of order $p^i$ in  the group $G$  and where
$l=|G|-1-(s_1+\cdots+s_n)$.
\end{theorem}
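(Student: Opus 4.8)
The plan is to realise $V(\mathbb{Z}_{p^e}G)$ as a quotient of the unit group of the $p$-adic group ring and then to read its structure off a single lattice over the ring $\mathcal O$ of $p$-adic integers, using that $\mathbb{Z}_{p^e}G=\mathcal OG/p^e\mathcal OG$. Write $\Omega=\omega(\mathcal OG)$, a free $\mathcal O$-module of rank $|G|-1$. I would first record three facts. \textbf{(i)} Since $\mathcal OG$ is a complete local ring, the reductions $V(\mathcal OG)\twoheadrightarrow V(\mathbb{Z}_{p^e}G)$ and $V(\mathcal OG)\twoheadrightarrow V(\mathbb{F}_pG)$ are surjective, with kernels $1+p^e\Omega$ and $1+p\Omega$. \textbf{(ii)} The $p$-adic logarithm gives an isomorphism $1+p\Omega\xrightarrow{\ \sim\ }(p\Omega,+)$ (convergence and injectivity are checked directly; at $p=2$ one uses that $\log(1+2\theta)=2\theta\cdot u$ with $u$ a unit, together with the torsion-freeness in (iii)); comparing indices then yields $1+p^e\Omega=(1+p\Omega)^{p^{e-1}}$. \textbf{(iii)} The torsion subgroup of $V(\mathcal OG)$ equals $G$; for abelian $G$ this is Weiss's rigidity theorem for finite $p$-subgroups of $V(\mathcal OG)$ (alternatively an elementary argument via the Wedderburn decomposition $\mathbb{Q}_pG\cong\prod_\chi\mathbb{Q}_p(\zeta_{\operatorname{ord}\chi})$ and the fact that the only available $p$-power roots of unity are $\mu_{\operatorname{ord}\chi}$). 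In particular $1+p\Omega$ is torsion-free.

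Next I would invoke Sandling's analysis of $V(\mathbb{F}_pG)$, reproving what is needed: since $G$ is abelian and $\operatorname{char}\mathbb{F}_pG=p$ one has $(1+a)^{p^k}=1+a^{p^k}$ and $(\sum_g c_g g)^{p^k}=\sum_g c_g g^{p^k}$, so the $p^k$-power endomorphism of $\mathbb{F}_pG$ is the $\mathbb{F}_p$-linearization $\phi^k$ of $g\mapsto g^{p^k}$ on $G$; hence $V(\mathbb{F}_pG)[p^k]=1+\ker(\phi^k|_{\omega(\mathbb{F}_pG)})$ has dimension $|G|-|G^{p^k}|$, the number of cyclic factors of $V(\mathbb{F}_pG)$ of order $p^i$ is $|G^{p^{i-1}}|-2|G^{p^i}|+|G^{p^{i+1}}|$, and $G$ is a pure, hence direct, subgroup of $V(\mathbb{F}_pG)$. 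Fixing a complement, $V(\mathbb{F}_pG)=G\times C$ with $C\cong \times_{i\ge1}s_iC_{p^i}$, the $s_i$ being exactly the integers in the statement. Set $T:=\pi^{-1}(C)$, where $\pi\colon V(\mathcal OG)\to V(\mathbb{F}_pG)$ is reduction mod $p$. As $\pi$ restricts to an isomorphism $G\xrightarrow{\sim}\pi(G)$ and $C$ is a complement of $\pi(G)$, we get $G\cap T=1$ and $GT=V(\mathcal OG)$, so $V(\mathcal OG)=G\times T$; moreover $T\supseteq\ker\pi=1+p\Omega$, so $T$ is a torsion-free (by (iii)) abelian pro-$p$ group containing $1+p\Omega\cong\mathcal O^{\,|G|-1}$ with finite index $|C|$, whence $T\cong\mathcal O^{\,|G|-1}$.

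Finally, reduce mod $p^e$. Since $1+p^e\Omega=(1+p\Omega)^{p^{e-1}}\subseteq T$, passing to the quotient by $1+p^e\Omega$ gives $V(\mathbb{Z}_{p^e}G)=(G\times T)/(1+p^e\Omega)=G\times\bigl(T/(1+p^e\Omega)\bigr)$, so $G$ is a direct factor and $\mathfrak L(\mathbb{Z}_{p^e}G)\cong T/(1+p^e\Omega)$. Put $S:=1+p\Omega\le T$; then $T/S\cong C\cong\times_i s_iC_{p^i}$, and $S\le T$ are both free $\mathcal O$-modules of rank $m:=|G|-1$. Choosing an $\mathcal O$-basis $f_1,\dots,f_m$ of $T$ adapted to $S$ (Smith normal form over the principal ideal domain $\mathcal O$) we have $S=\bigoplus_j p^{b_j}\mathcal Of_j$, where among the exponents $b_j$ exactly $l=|G|-1-\sum_i s_i$ equal $0$ and, for each $i\ge1$, exactly $s_i$ equal $i$. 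Because in the abelian group $T$ the subgroup $S^{p^{e-1}}$ is just $p^{e-1}S=\bigoplus_j p^{\,b_j+e-1}\mathcal Of_j$, we conclude
\[
\mathfrak L(\mathbb{Z}_{p^e}G)\cong T/S^{p^{e-1}}\cong\bigoplus_{j=1}^m\mathcal O/p^{\,b_j+e-1}\cong l\,C_{p^{e-1}}\times\Bigl(\times_{i=1}^{n}s_i\,C_{p^{\,i+e-1}}\Bigr),
\]
which is the assertion. The main obstacle is fact (iii) — the triviality of the torsion units of the $p$-adic group ring $\mathcal OG$ (equivalently, that no finite subgroup of $V(\mathcal OG)$ escapes $G$); granting this, together with Sandling's description of $V(\mathbb{F}_pG)$, everything else is a formal comparison of $\mathcal O$-lattices. (A self-contained alternative avoiding (iii) is to compute $\dim_{\mathbb{F}_p}V(\mathbb{Z}_{p^e}G)[p^k]$ for all $k$ via the filtration $V=1+\omega(\mathbb{Z}_{p^e}G)\supseteq 1+p\omega\supseteq\cdots$ and the obstruction maps $\beta_k$ to lifting torsion from $V(\mathbb{F}_pG)$, but there the rigidity statement "$\ker\beta_k$ is governed exactly by $G$" becomes the corresponding hard point.)
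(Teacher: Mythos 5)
Your argument is correct in outline, but it takes a genuinely different route from the paper. The paper never leaves the finite rings: it proves Theorem 1 first (every unit of order $p$ lies in $G[p]\cdot(1+p^{e-1}\omega)$), uses the reduction maps $\overline{f_e}\colon V(\mathbb{Z}_{p^e}G)\to V(\mathbb{Z}_{p^{e-1}}G)$ to lift Johnson's complement $\mathfrak L(\mathbb{Z}_pG)$ step by step (Lemma \ref{L:7}), shows that each lift multiplies every element order by exactly $p$ (Lemma \ref{L:8}, which rests on Theorem 1), and then runs an induction on $e$ starting from Sandling's invariants at $e=1$. You instead pass to the top of the tower, $\mathcal OG$ with $\mathcal O=\mathbb Z_p$, identify the complement there as a free $\mathcal O$-lattice $T$ containing $S=1+p\Omega$ with $T/S\cong\times_i s_iC_{p^i}$, and read off every finite level at once as $T/p^{e-1}S$ via the elementary divisors of $S$ in $T$. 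This is conceptually cleaner --- it makes transparent why the answer is ``add $e-1$ to each exponent and pad with $l=|G|-1-\sum s_i$ copies of $C_{p^{e-1}}$'' --- and the lattice bookkeeping (Smith normal form, $1+p^e\Omega=(1+p\Omega)^{p^{e-1}}$ via $\log/\exp$, with the separate $2$-adic estimate) is all sound. The price is your fact (iii): the paper needs nothing like the statement that the torsion of $V(\mathcal OG)$ is exactly $G$, whereas for you it is indispensable (without it $T$ could have a finite summand), and Weiss's theorem is a much heavier input than anything the paper uses. Two remarks: first, you can make this input cheap by deriving it from the paper's own Theorem 1 --- a torsion element of $T$ may be taken of order $p$, its image in each $V(\mathbb{Z}_{p^e}G)$ is $c_e(1+p^{e-1}z_e)$ with $c_e\in G[p]$, and letting $e\to\infty$ forces the element into $G\cap T=1$; this makes your proof self-contained relative to the paper. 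Second, ``comparing indices'' is not by itself a proof that $1+p^e\Omega=(1+p\Omega)^{p^{e-1}}$; you should say explicitly that $\log$ carries both sides onto $p^e\Omega$ (the valuation estimates $v_p(p^{en}/n!)\ge e$ and $v_p(p^{en}/n)\ge e$ do this for all $p$, including $p=2$).
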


Note that Lemma 9 itself  can be considered as a separate result.

\section{Preliminaries}

If $H$ is a subgroup of $G$, then  we denote the left   transversal of $G$ with respect to $H$  by $\mathfrak{R}_l(G/H)$.
We denote the ideal of $FG$ generated by the elements $h-1$ for  $h\in H$ by ${\mathfrak I}(H)$. Furthermore $FG/{\mathfrak I}(H)\cong F[G/H]$  and
\[
V(FG)/\big(1+ {\mathfrak I}(H)\big)\cong V\big(F[G/H]\big).
\]
Denote  the subgroup of $G$ generated by elements of order $p^n$ by $G[p^n]$.

We  start with the following well known results.
\begin{lemma}\label{L:1}
Let $p$ be a prime and $j=p^lk$, where $(k,p)=1$. If  $l\le  n$, then  $p^{n-l}$ is the largest $p$-power divisor of the binomial coefficient ${p^n \choose j}$.
\end{lemma}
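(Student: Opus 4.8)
The plan is to reduce the whole statement to the single absorption identity
\[
j\binom{p^{n}}{j}=p^{n}\binom{p^{n}-1}{j-1}.
\]
Write $j=p^{l}k$ with $(k,p)=1$; since $\binom{p^{n}}{j}$ must be a nonzero integer we have $1\le j\le p^{n}$, and then $l\le n$ as in the hypothesis. Substituting and cancelling $p^{l}$ turns the identity into $k\binom{p^{n}}{j}=p^{n-l}\binom{p^{n}-1}{j-1}$. Taking $p$-adic valuations and using $(k,p)=1$ gives $v_{p}\!\binom{p^{n}}{j}=(n-l)+v_{p}\!\binom{p^{n}-1}{j-1}$, so the lemma comes down to showing that the cofactor $\binom{p^{n}-1}{j-1}$ is prime to $p$.

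For that step I would invoke Lucas' theorem together with the observation that the base-$p$ expansion of $p^{n}-1$ is $\sum_{i=0}^{n-1}(p-1)p^{i}$, i.e.\ all of its $n$ digits equal the maximal value $p-1$. Since $0\le j-1\le p^{n}-1$, every base-$p$ digit $a$ of $j-1$ satisfies $0\le a\le p-1$, hence $\binom{p-1}{a}\equiv(-1)^{a}\not\equiv 0\pmod p$; Lucas' theorem then yields $\binom{p^{n}-1}{j-1}\equiv\prod_{a}\binom{p-1}{a}\not\equiv 0\pmod p$, as required. (Equivalently one can argue via Kummer's theorem that no carries occur when $j-1$ and $p^{n}-j$ are added in base $p$: a carry out of some position, combined with the fact that the corresponding digit of the sum $p^{n}-1$ is the maximal $p-1$, would force a carry into that position as well, and iterating downwards contradicts the absence of a carry into the units place.)

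Putting the two parts together gives $v_{p}\!\binom{p^{n}}{j}=n-l$, which is the assertion; the degenerate cases ($j=1$, and $j=p^{n}$ where $j-1=p^{n}-1$) follow at once from the identity. I expect the only genuinely substantive point to be the middle step — that the unique surviving cofactor $\binom{p^{n}-1}{j-1}$ contributes no factor of $p$ — and it is precisely there that the special form of $p^{n}-1$ in base $p$ does the work; the remainder is routine manipulation of valuations.
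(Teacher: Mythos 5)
Your proof is correct. The first reduction is exactly the one the paper makes: the paper's displayed identity is just $\binom{p^n}{j}=\frac{p^n}{j}\binom{p^n-1}{j-1}=\frac{p^{n-l}}{k}\binom{p^n-1}{j-1}$ with the cofactor written out as an explicit product, so both arguments pull out $p^{n-l}$ via the absorption identity and then must show that $\binom{p^n-1}{j-1}$ is a $p$-adic unit. Where you differ is in how that last point is established: you invoke Lucas' theorem (or Kummer's carry criterion), using that every base-$p$ digit of $p^n-1$ is the maximal digit $p-1$, whereas the paper splits the product $\prod_{i=1}^{j-1}(p^n-i)$ into factors with $(i,p)=1$ and factors with $p\mid i$ and matches valuations term by term against $(j-1)!$, i.e.\ it uses the elementary fact that $v_p(p^n-i)=v_p(i)$ for $1\le i<p^n$. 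Your route imports a standard theorem and is arguably cleaner to verify; the paper's is self-contained and needs no machinery beyond the definition of the binomial coefficient. Both are complete, and your explicit treatment of why the cofactor is prime to $p$ is in fact more detailed than the paper's one-line proof, which leaves that cancellation to the reader.
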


\begin{proof}
If  $j=p^lk$ and  $(k,p)=1$, then the statement follows from
\[
{p^n\choose j}=p^{n-l}\cdot \frac{\prod_{i=1, (i,p)=1}^{j-1} (p^{n}-i)\cdot \prod_{i=1}^{p^{l-1}k}(p^{n}-pi)}{(j-1)!k}.
\]
\end{proof}

\begin{lemma}\label{L:2}
Let $G$ be a finite $p$-group and let $R$ be a commutative ring of characteristic  $p^e$ with   $e\geq 1$.   If  $l\geq e$  then
\[
(1-g)^{p^l}=(1-g^{p^s})^{p^{(l-s)}}, \qquad \quad(s=0,\ldots, l-e+1).
\]
\end{lemma}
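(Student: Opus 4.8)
The plan is to reduce everything to the binomial expansion in the commutative ring $RG$ and then invoke Lemma~\ref{L:1} to control the $p$-adic valuations of the coefficients. First I would expand $(1-g)^{p^l}$ by the binomial theorem, writing
\[
(1-g)^{p^l}=\sum_{j=0}^{p^l}(-1)^j\binom{p^l}{j}g^j,
\]
and separately expand the right-hand side $(1-g^{p^s})^{p^{l-s}}$ in the same fashion. The goal is to show these two sums agree coefficient-by-coefficient modulo $p^e$. By Lemma~\ref{L:1}, the $p$-adic valuation of $\binom{p^l}{j}$, for $j=p^a k$ with $(k,p)=1$ and $a\le l$, is exactly $l-a$; in particular, whenever $a<l-e+1$, i.e. $l-a>e-1$, i.e. $l-a\ge e$, the coefficient $\binom{p^l}{j}$ is divisible by $p^e$ and hence vanishes in $RG$. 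So modulo $p^e$ only the terms with $j$ divisible by $p^{l-e+1}$ survive, say $j=p^{l-e+1}m$, and the surviving range of $m$ together with the restriction $s\le l-e+1$ is exactly what makes the substitution $g\mapsto g^{p^s}$ legitimate.

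Concretely, I would argue by induction on $s$, the base case $s=0$ being trivial. For the inductive step it suffices to prove the single identity $(1-g)^{p^l}=(1-g^{p})^{p^{l-1}}$ whenever $l\ge e$, and then apply it repeatedly (replacing $g$ by $g^p$, which is again an element of the $p$-group $G$, and noting that the exponent drops from $l$ to $l-1\ge e-1$, so one more application still needs $l-1\ge e$, accounting for the bound $s\le l-e+1$). For this single step, expand $(1-g)^{p^l}-(1-g^p)^{p^{l-1}}$ and collect the coefficient of each group element $g^t$. Using Lemma~\ref{L:1}, every binomial coefficient appearing with a power of $g$ that is not a multiple of $p$ is divisible by $p^e$; after discarding those, the remaining sum in powers of $g^p$ matches the expansion of $(1-g^p)^{p^{l-1}}$ term by term, because $\binom{p^l}{p j}\equiv\binom{p^{l-1}}{j}\pmod{p^e}$ — again a consequence of comparing the valuations given by Lemma~\ref{L:1} (both sides have valuation $(l-1)-a$ when $j=p^a k$, and one checks the prime-to-$p$ parts agree modulo $p^e$ using the explicit product formula in the proof of Lemma~\ref{L:1}).

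The main obstacle I anticipate is the congruence $\binom{p^l}{pj}\equiv\binom{p^{l-1}}{j}\pmod{p^e}$: knowing the two sides have the same $p$-adic valuation is not quite enough, since one needs the prime-to-$p$ parts of the two binomial coefficients to be congruent modulo a high enough power of $p$ (specifically modulo $p^e$ after the common $p$-power factor is removed). This is where the explicit factorization from the proof of Lemma~\ref{L:1} does the real work: writing $j=p^a k$ with $(k,p)=1$, both $\binom{p^l}{pj}/p^{(l-1)-a}$ and $\binom{p^{l-1}}{j}/p^{(l-1)-a}$ are products of factors of the shape $(p^N-i)$ over prime-to-$p$ residues $i$, and modulo $p^e$ each such factor reduces to $-i$ regardless of $N\ge e$; since the index sets of the residues $i$ coincide, the two products agree modulo $p^e$. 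Once that congruence is in hand, the coefficient-wise comparison closes the induction, and because every step requires the exponent to stay $\ge e$, the substitution $g\mapsto g^{p^s}$ is valid precisely for $s$ in the stated range $0\le s\le l-e+1$.
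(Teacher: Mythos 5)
Your overall architecture is sound, and it is worth noting that the paper itself does not prove this lemma at all (it simply cites Lemma~2.4 of Coleman--Easdown), so a self-contained argument is a genuine contribution. Reducing to the single step $(1-g)^{p^l}=(1-g^{p})^{p^{l-1}}$ for $l\geq e$ and iterating, with the exponent bookkeeping giving exactly the range $s\leq l-e+1$, is the right plan; and Lemma~\ref{L:1} does kill every term $\binom{p^l}{j}g^j$ with $p\nmid j$, since its valuation is $l\geq e$. The gap is in your justification of the central congruence $\binom{p^l}{pj}\equiv\binom{p^{l-1}}{j}\pmod{p^e}$, which you correctly single out as the main obstacle. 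Your claim that the two normalized coefficients are products of factors $(p^N-i)$ over prime-to-$p$ residues $i$ with \emph{coinciding} index sets is false: the displayed formula in the proof of Lemma~\ref{L:1} carries the denominator $(j-1)!\,k$ and the factors $p^n-pi$, which are not prime to $p$, and once the genuine unit factors are isolated the two index sets are $\{1\leq i\leq pj-1:(i,p)=1\}$ and $\{1\leq i\leq j-1:(i,p)=1\}$, which do not even have the same cardinality. As written, this step does not go through. You also silently pass over a sign issue at $p=2$: the coefficient of $x^{pj}$ in $(1-x)^{p^l}$ is $(-1)^{pj}\binom{p^l}{pj}$, and $(-1)^{2j}\neq(-1)^{j}$ for $j$ odd.

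The congruence is nevertheless true and the proof is repairable. Along your lines: from $\binom{p^l}{m}=\frac{p^l}{m}\prod_{i=1}^{m-1}\frac{p^l-i}{i}$, cancelling the factors with $p\mid i$ against the analogous expansion of $\binom{p^{l-1}}{j}$ gives
\[
\frac{\binom{p^l}{pj}}{\binom{p^{l-1}}{j}}
=\prod_{\substack{1\leq i\leq pj-1\\ (i,p)=1}}\frac{p^l-i}{i}
\equiv(-1)^{(p-1)j}\pmod{p^l},
\]
which equals $1$ for odd $p$; for $p=2$ the residual sign $(-1)^j$ is harmless because for $j$ odd one has $v_2\bigl(\binom{2^{l-1}}{j}\bigr)=l-1\geq e-1$, so $\pm\binom{2^{l-1}}{j}$ agree modulo $2^e$, and this also absorbs the $(-1)^{pj}$ versus $(-1)^j$ discrepancy. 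A cleaner route that avoids comparing binomial coefficients altogether: write $(1-x)^p=(1-x^p)+pf(x)$ with $f\in\mathbb{Z}[x]$, raise to the power $p^{l-1}$, and note via Lemma~\ref{L:1} that each cross term $\binom{p^{l-1}}{k}(1-x^p)^{p^{l-1}-k}p^kf(x)^k$ with $k\geq1$ has $p$-valuation at least $(l-1)-v_p(k)+k\geq l\geq e$, since $k-v_p(k)\geq1$. Either repair yields the single-step identity as a polynomial congruence modulo $p^e$, after which your iteration and the substitution $g\mapsto g^{p^s}$ close the argument.
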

\begin{proof}
See Lemma 2.4 in \cite{Coleman_Easdown}.\end{proof}

Let $R$ be a commutative ring of characteristic  $p^e$ with   $e\geq 1$. The ideal
$\omega(RG)$ is nilpotent (\cite{Bovdi_book}, Theorem 2.10, p.10) and the $nth$ power $\omega^n(RG)$  determines  the so-called  {\bf $nth$ dimension subgroup}
\[
\mathfrak{D}_n(RG)=G\cap\big(1+ \omega^n(RG)\big), \qquad (n\geq 1).
\]
\begin{lemma}\label{L:3}
(see 1.14, \cite{Sandling_I})
Let $e\geq 1$. If  $G$ is  a finite abelian $p$-group, then
\[
\mathfrak{D}_n(\mathbb{Z}_{p^e}G)=\begin{cases}
G, &  \text{ if }\quad  n=1;\\
G^{p^{e+i}}, & \text{if}\quad  \quad  p^i<n\leq p^{i+1}.
\end{cases}
\]
\end{lemma}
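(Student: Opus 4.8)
The case $n=1$ is immediate, since $\omega(\mathbb{Z}_{p^e}G)$ contains $g-1$ for every $g\in G$. For $n\ge 2$ write $p^i<n\le p^{i+1}$ with $i\ge 0$. The plan is to establish the two inclusions $G^{p^{e+i}}\subseteq\mathfrak{D}_n(\mathbb{Z}_{p^e}G)$ and $\mathfrak{D}_n(\mathbb{Z}_{p^e}G)\subseteq G^{p^{e+i}}$; since $\mathfrak{D}_k(\mathbb{Z}_{p^e}G)$ is decreasing in $k$, it is enough to treat $n=p^{i+1}$ for the first and $n=p^i+1$ for the second, as then $G^{p^{e+i}}\subseteq\mathfrak{D}_{p^{i+1}}(\mathbb{Z}_{p^e}G)\subseteq\mathfrak{D}_{n}(\mathbb{Z}_{p^e}G)\subseteq\mathfrak{D}_{p^i+1}(\mathbb{Z}_{p^e}G)\subseteq G^{p^{e+i}}$ for every $n$ in the interval.

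For $G^{p^{e+i}}\subseteq\mathfrak{D}_{p^{i+1}}(\mathbb{Z}_{p^e}G)$, fix $g\in G$ and expand $g^{p^{e+i}}-1=\sum_{t\ge 1}\binom{p^{e+i}}{t}(g-1)^{t}$. By Lemma~\ref{L:1} the term with $t=p^{l}m$, $(m,p)=1$, has coefficient of $p$-adic valuation $e+i-l$, so it vanishes in $\mathbb{Z}_{p^e}$ unless $l\ge i+1$, in which case $t=p^{l}m\ge p^{i+1}$. Hence every surviving term lies in $\omega^{p^{i+1}}(\mathbb{Z}_{p^e}G)$: the one of lowest degree is a $\mathbb{Z}_{p^e}$-multiple of $(g-1)^{p^{i+1}}$, and for the others $(g-1)^{t}\in\omega^{t}(\mathbb{Z}_{p^e}G)\subseteq\omega^{p^{i+1}}(\mathbb{Z}_{p^e}G)$. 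Thus $g^{p^{e+i}}-1\in\omega^{p^{i+1}}(\mathbb{Z}_{p^e}G)$, that is, $g^{p^{e+i}}\in\mathfrak{D}_{p^{i+1}}(\mathbb{Z}_{p^e}G)$.

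For the reverse inclusion I would reduce modulo $\mathfrak{I}(G^{p^{e+i}})$: the isomorphism $\mathbb{Z}_{p^e}G/\mathfrak{I}(G^{p^{e+i}})\cong\mathbb{Z}_{p^e}[G/G^{p^{e+i}}]$ carries $\omega^{n}(\mathbb{Z}_{p^e}G)$ onto $\omega^{n}(\mathbb{Z}_{p^e}[G/G^{p^{e+i}}])$, so $\mathfrak{D}_{p^i+1}(\mathbb{Z}_{p^e}G)\subseteq G^{p^{e+i}}$ is equivalent to the statement that $\mathfrak{D}_{p^i+1}(\mathbb{Z}_{p^e}H)=1$ for every finite abelian $p$-group $H$ with $\exp(H)$ dividing $p^{e+i}$. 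Given such an $H$ and $h\ne 1$ in $H$, project onto a cyclic direct factor $\langle c\rangle$ of $H$ on which $h$ has nontrivial image $c^{m}$; then $|c|\le p^{e+i}$, and expanding $c^{m}-1=\sum_{t\ge 1}\binom{m}{t}(c-1)^{t}$ and using Lemma~\ref{L:1} together with $v_{p}\binom{m}{t}\ge v_{p}(m)-v_{p}(t)$ one sees that the unique term of lowest $(c-1)$-degree occurs at a $p$-power $t_{0}=p^{l}$ with $p^{l}\le p^{i}$ and has coefficient $p^{b}u$, where $u$ is a unit of $\mathbb{Z}_{p^e}$ and $0\le b\le e-1$. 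All remaining terms lie in $\omega^{p^{l}+1}(\mathbb{Z}_{p^e}\langle c\rangle)$, so once one knows that $p^{b}(c-1)^{p^{l}}\notin\omega^{p^{l}+1}(\mathbb{Z}_{p^e}\langle c\rangle)$ it follows that $c^{m}-1\notin\omega^{p^i+1}(\mathbb{Z}_{p^e}\langle c\rangle)$ and hence $h-1\notin\omega^{p^i+1}(\mathbb{Z}_{p^e}H)$, as required.

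The genuine difficulty is the last non-vanishing assertion, which amounts to $p^{b}\notin\omega(\mathbb{Z}_{p^e}\langle c\rangle)+\mathrm{ann}\big((c-1)^{p^{l}}\big)$; establishing it needs a sufficiently precise description of the $\omega$-adic filtration of $\mathbb{Z}_{p^e}\langle c\rangle$ — of the annihilators $\mathrm{ann}\big((c-1)^{t}\big)$ and of the least power of $c-1$ that is zero — to ensure that the relation $c^{|c|}=1$ does not collapse $p^{b}(c-1)^{p^{l}}$ into a higher power of $\omega$. Lemma~\ref{L:2} is the key tool here, since it tells exactly when high powers of $c-1$ vanish and so controls the top of the filtration. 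Alternatively one may invoke R.~Sandling's explicit computation of the dimension subgroups of $\mathbb{Z}_{p^e}G$ in~\cite{Sandling_I}, as indicated above.
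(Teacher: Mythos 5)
The paper itself does not prove this lemma --- it is quoted from Sandling's computation of dimension subgroups over arbitrary coefficient rings (result 1.14 of \cite{Sandling_I}) --- so there is no internal argument to measure yours against. Judged on its own, your outline is the right one and most of it is correct: the reduction to the extreme values $n=p^{i+1}$ and $n=p^{i}+1$ by monotonicity, the expansion of $g^{p^{e+i}}-1$ with Lemma~\ref{L:1} showing it lies in $\omega^{p^{i+1}}$, the passage to $G/G^{p^{e+i}}$, the projection onto a cyclic direct factor $\langle c\rangle$, and the bookkeeping giving $c^{m}-1\equiv p^{b}u\,(c-1)^{p^{l}}\pmod{\omega^{p^{l}+1}}$ with $b=a,\ l=0$ when $a=v_{p}(m)<e$ and $b=e-1,\ l=a-e+1$ when $a\ge e$ (so $0\le b\le e-1$ and $p^{l}\le p^{i}$, using $a<v_p(|c|)$) all check out.

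However, you stop at the one claim that carries the whole content of the lemma, namely $p^{b}(c-1)^{p^{l}}\notin\omega^{p^{l}+1}(\mathbb{Z}_{p^e}\langle c\rangle)$, and an argument that ends by labelling this ``the genuine difficulty'' is not yet a proof. The missing step is in fact short and needs only Lemma~\ref{L:1}, not Lemma~\ref{L:2}. Write $\mathbb{Z}_{p^e}\langle c\rangle=\mathbb{Z}_{p^e}[y]/(R(y))$ with $y=c-1$, $|c|=p^{N}$, $R(y)=(1+y)^{p^{N}}-1=\sum_{j\ge1}\binom{p^{N}}{j}y^{j}$. A membership $p^{b}u\,y^{p^{l}}=y^{p^{l}+1}f(y)+R(y)g(y)$ in $\mathbb{Z}_{p^e}[y]$ forces, on comparing coefficients of $y^{p^{l}}$, that $p^{b}u=\sum_{j=1}^{p^{l}}\binom{p^{N}}{j}g_{p^{l}-j}$; since $v_{p}\binom{p^{N}}{j}=N-v_{p}(j)\ge N-l$ for $1\le j\le p^{l}$, the right-hand side has $p$-adic valuation at least $\min(N-l,e)$, whereas the left-hand side has valuation $b$. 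One then checks $b<\min(N-l,e)$ in both of your cases: if $a<e$ then $b=a<\min(N,e)$ because $c^{m}\neq1$ gives $a<N$; if $a\ge e$ then $b=e-1$ while $N-l=N-a+e-1\ge e$, again because $a<N$. This contradiction completes the reverse inclusion, and with it your argument becomes a full, self-contained proof independent of \cite{Sandling_I}.
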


The next two lemmas are  well known.
\begin{lemma}\label{L:4}
If  $G$ is   a finite  abelian $p$-group, then
\[
V(\mathbb{Z}_{p}G)[p]=1+\mathfrak I(G[p]).
\]
\end{lemma}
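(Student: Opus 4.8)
The plan is to reduce the statement to a short linear-algebra computation over the field $\mathbb{Z}_p$. First I would record that $V(\mathbb{Z}_pG)=1+\omega(\mathbb{Z}_pG)$ and that, since $\mathbb{Z}_pG$ is commutative of characteristic $p$, the binomial theorem gives $(1+x)^p=1+x^p$ for every $x\in\omega(\mathbb{Z}_pG)$. Hence $u=1+x$ lies in $V(\mathbb{Z}_pG)[p]$ if and only if $x^p=0$, so the asserted equality is equivalent to $\{x\in\mathbb{Z}_pG:x^p=0\}=\mathfrak{I}(G[p])$; the restriction to $\omega(\mathbb{Z}_pG)$ disappears because $\{x:x^p=0\}\subseteq\omega(\mathbb{Z}_pG)$ automatically (the coefficient sum of $x^p$ is the $p$-th power of the coefficient sum of $x$), and in the other direction $\mathfrak{I}(G[p])\subseteq\omega(\mathbb{Z}_pG)$ since each generator $h-1$ has augmentation $0$.

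Next I would bring in the $p$-power map $\phi\colon\mathbb{Z}_pG\to\mathbb{Z}_pG$, $\phi(x)=x^p$. As $\mathbb{Z}_pG$ is commutative of characteristic $p$ and $\lambda^p=\lambda$ for all $\lambda\in\mathbb{Z}_p$, the map $\phi$ is at once a ring endomorphism and $\mathbb{Z}_p$-linear, so $\ker\phi$ is an \emph{ideal} of $\mathbb{Z}_pG$. For $h\in G[p]$ one has $\phi(h-1)=(h-1)^p=h^p-1=0$, so $h-1\in\ker\phi$; since $\mathfrak{I}(G[p])$ is by definition the ideal generated by the $h-1$ with $h\in G[p]$, this already gives $\mathfrak{I}(G[p])\subseteq\ker\phi$.

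The reverse inclusion I would obtain from a dimension count. For $x=\sum_{g\in G}c_gg$ one computes $\phi(x)=\sum_{g\in G}c_g^pg^p=\sum_{g\in G}c_gg^p$, so the image of $\phi$ is the $\mathbb{Z}_p$-span of $\{g^p:g\in G\}=G^p$, of dimension $|G^p|$; therefore $\dim_{\mathbb{Z}_p}\ker\phi=|G|-|G^p|$. On the other hand $\mathbb{Z}_pG/\mathfrak{I}(G[p])\cong\mathbb{Z}_p[G/G[p]]$ yields $\dim_{\mathbb{Z}_p}\mathfrak{I}(G[p])=|G|-|G/G[p]|$, and since the endomorphism $g\mapsto g^p$ of the finite abelian group $G$ has kernel $G[p]$ and image $G^p$ we have $|G/G[p]|=|G^p|$. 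Hence $\dim_{\mathbb{Z}_p}\mathfrak{I}(G[p])=|G|-|G^p|=\dim_{\mathbb{Z}_p}\ker\phi$, and together with the inclusion from the previous step this forces $\mathfrak{I}(G[p])=\ker\phi$. Combining the three steps gives $V(\mathbb{Z}_pG)[p]=1+\ker\phi=1+\mathfrak{I}(G[p])$.

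I do not expect a genuine obstacle here: the whole argument hinges on the single observation that the Frobenius endomorphism of $\mathbb{Z}_pG$ has an ideal for a kernel, plus the matching count of dimensions. The place most prone to a slip is the identification $\dim_{\mathbb{Z}_p}(\operatorname{im}\phi)=|G^p|$ — one must use $\lambda^p=\lambda$ to see that the image is spanned exactly by the set of $p$-th powers $\{g^p:g\in G\}$ — together with the elementary group identity $|G|=|G[p]|\cdot|G^p|$. Should one prefer to avoid the rank argument, there is an alternative: fix a decomposition $G=\langle g_1\rangle\times\cdots\times\langle g_d\rangle$ with $|g_i|=p^{a_i}$, identify $\mathbb{Z}_pG$ with $\mathbb{Z}_p[t_1,\dots,t_d]/(t_1^{p^{a_1}},\dots,t_d^{p^{a_d}})$ via $g_i\mapsto 1+t_i$, and check on the monomial basis that $x^p=0$ forces $x$ into the ideal $(t_1^{p^{a_1-1}},\dots,t_d^{p^{a_d-1}})$, which equals $\mathfrak{I}(G[p])$ because $t_i^{p^{a_i-1}}=g_i^{p^{a_i-1}}-1$ (Lemma \ref{L:2}) and $G[p]=\langle g_1^{p^{a_1-1}}\rangle\times\cdots\times\langle g_d^{p^{a_d-1}}\rangle$.
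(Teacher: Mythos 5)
Your argument is correct and complete. Note that the paper itself does not prove Lemma~\ref{L:4} at all: it simply cites Lemma~3.3 of \cite{Bovdi_Grishkov}, so there is no internal proof to compare against. Your route is the natural self-contained one: in characteristic $p$ the identity $(1+x)^p=1+x^p$ reduces the statement to identifying $\ker\phi$ for the Frobenius $\phi(x)=x^p$; the observation that $\lambda^p=\lambda$ on $\mathbb{Z}_p$ makes $\phi$ simultaneously a ring endomorphism and a $\mathbb{Z}_p$-linear map, so its kernel is an ideal containing all $h-1$ with $h\in G[p]$; and the dimension count $\dim\ker\phi=|G|-|G^p|=\dim\mathfrak{I}(G[p])$ (using $|G/G[p]|=|G^p|$ for abelian $G$) closes the gap. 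Each step checks out, including the small but necessary remark that $x^p=0$ forces $\varepsilon(x)=0$, so that $\ker\phi$ already sits inside $\omega(\mathbb{Z}_pG)$ and the intersection with the augmentation ideal is automatic. What this buys over the paper's bare citation is a two-line conceptual proof whose only input is the Frobenius; your alternative via the identification $\mathbb{Z}_pG\cong\mathbb{Z}_p[t_1,\dots,t_d]/(t_1^{p^{a_1}},\dots,t_d^{p^{a_d}})$ is also viable but requires the monomial-basis verification you only sketch, so the rank argument is the cleaner of the two.
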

\begin{proof}
See Lemma 3.3 in \cite{Bovdi_Grishkov}. \end{proof}

\begin{lemma}\label{L:5}
Let $U(R)$ be the group of units of a commutative  ring $R$ with $1$.
If  $I$ is  a nilpotent ideal  in $R$, then
\[
U(R)/ (1+I) \cong U(R/I)
\]
and the group $(1+ I^m) /( 1+ I^{m+1})$ is isomorphic to the additive
group of the quotient  $I^m / I^{m+1}$.
\end{lemma}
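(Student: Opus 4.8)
The plan is to establish the two isomorphisms separately, each via the first isomorphism theorem for groups. First I would record the one basic fact that drives everything: if $I$ is nilpotent, say $x^k=0$ for $x\in I$, then
$(1+x)\bigl(1-x+x^2-\cdots+(-1)^{k-1}x^{k-1}\bigr)=1$, so $1+I\subseteq U(R)$; and $(1+x)(1+y)=1+(x+y+xy)$ with $x+y+xy\in I$ shows $1+I$ is closed under multiplication. Hence $1+I$ is a subgroup of $U(R)$, and the same reasoning applies verbatim to the nilpotent ideals $I^m$ and $I^{m+1}$, so $1+I^m$ and $1+I^{m+1}$ are subgroups of $U(R)$ too.

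For the first isomorphism, consider the group homomorphism $\pi\colon U(R)\to U(R/I)$ induced by the canonical projection $R\to R/I$, $u\mapsto u+I$ (it lands in $U(R/I)$ since units map to units). Its kernel is exactly $U(R)\cap(1+I)=1+I$. For surjectivity, given $\bar u\in U(R/I)$, choose lifts $r,s\in R$ of $\bar u$ and $\bar u^{-1}$; then $rs\in 1+I\subseteq U(R)$, so $r$ has a right inverse and, by commutativity, $r\in U(R)$ with $\pi(r)=\bar u$. The first isomorphism theorem then gives $U(R)/(1+I)\cong U(R/I)$.

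For the second isomorphism, define $\phi\colon 1+I^m\to I^m/I^{m+1}$ by $1+x\mapsto x+I^{m+1}$ for $x\in I^m$. It is plainly well defined and surjective, and its kernel is $1+I^{m+1}$. The only point needing a moment's thought is that $\phi$ is a homomorphism from the multiplicative group to the additive group: for $x,y\in I^m$ one has $xy\in I^{2m}\subseteq I^{m+1}$ because $m\ge 1$, hence $\phi\bigl((1+x)(1+y)\bigr)=(x+y+xy)+I^{m+1}=(x+y)+I^{m+1}=\phi(1+x)+\phi(1+y)$. Applying the first isomorphism theorem again yields $(1+I^m)/(1+I^{m+1})\cong I^m/I^{m+1}$ as additive groups.

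There is no serious obstacle here; the statement is entirely elementary. The only places that merit care are the use of nilpotence to guarantee $1+I\subseteq U(R)$ and the lifting argument for surjectivity in the first part, together with the inclusion $I^{2m}\subseteq I^{m+1}$ — valid precisely because $m\ge 1$ — which is what makes $\phi$ convert products into sums in the second part.
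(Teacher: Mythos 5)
Your proof is correct and follows essentially the same route as the paper: both parts are the first isomorphism theorem applied to the induced map $U(R)\to U(R/I)$ (with the same lifting argument for surjectivity) and to the map $1+x\mapsto x+I^{m+1}$, where the key point $xy\in I^{2m}\subseteq I^{m+1}$ is the same computation the paper performs. Your write-up is in fact slightly more explicit than the paper's in justifying that $1+I\subseteq U(R)$ via nilpotence and in isolating why $m\ge 1$ matters, but the substance is identical.
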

\begin{proof} Note that $I$ is the kernel of the natural epimorphism $\sigma:
R\to R/I$. On  $U(R)$ the map $\sigma$ induces
the group homomorphism  $\tilde{\sigma}: U(R)\to U(R/I)$
which is an epimorphism with kernel $1+I$.  Indeed, if $x+I\in
U(R/I)$ and $\sigma(w)=(x+I)^{-1}$, then
\[
\sigma(xw)=(x+I)(x+I)^{-1}=1+I.
\]
Thus $xw= 1+t$ for some $t\in I$
and $1+t$ is a unit in $R$, so $w\in U(R)$. Of course
$x=w^{-1}(1+t)$ is a unit such that
$\tilde{\sigma}(w)=(x+I)^{-1}=x^{-1}+I$. Therefore
$\tilde{\sigma}:U(R)\to U(R/I)$ is an epimorphism.

Now, let $x, y\in I^{m}$ and put $\psi(1+x)= x+ I^{m+1}$.
Then
\[
\begin{split}
\psi((1+x)(1+y))&= xy+x+y+I^{m+1} \\
&= x+y+ I^{m+1}= \psi(1+x)
+\psi(1+y),
\end{split}
\]
so  $\psi$ is a homomorphism of the multiplicative group $1+ I^m$
to the additive group $I^{m} / I^{m+1}$ with kernel $1+I^{m+1}$.
\end{proof}

\bigskip

Let $f_e: \mathbb{Z}_{p^e}\to \mathbb{Z}_{p^{e-1}}$ $(e\geq 2)$
be a ring homomorphism  determined   by
\[
f_e(a+(p^e))=a+(p^{e-1})\qquad \quad (a\in \mathbb{Z}).
\]
Clearly $\mathbb{Z}_{p^e}/(p^{e-1} \mathbb{Z}_{p^e})\cong
\mathbb{Z}_{p^{e-1}}$ and the homomorphism $f_{e}$ can be linearly extended  to the
group ring homomorphism
\begin{equation}\label{E:1}
\overline{f_e}:\mathbb{Z}_{p^e}G \rightarrow \mathbb{Z}_{p^{e-1}}G.
\end{equation}
Let us define the map  $\frak{r}: \mathbb{Z}_{p^e}\to \mathbb{Z}$ to be  the map with the property that for any integer $\alpha$ with
$0\leq \alpha< p^e-1$ we have $\frak{r}\m1(\alpha)=\overline{\alpha}\in \mathbb{Z}_{p^e}$.
Obviously,    $\mathbb{Z}_{p^e}\ni \gamma_g=\alpha_g+p^{e-1}\beta_g$, where $0\leq \frak{r}(\alpha_g) <p^{e-1}$. Hence any $x\in \mathbb{Z}_{p^e}G$ can be written as
\begin{equation}\label{E:2}
x=\sum_{g\in G}\gamma_g g=\sum_{g\in G}\alpha_gg+p^{e-1}\sum_{g\in G}\beta_gg,
\end{equation}
where  $red_p(x)=\sum_{g\in G}\alpha_gg\in \mathbb {Z}_{p^e}G$  is called the {\bf
$p$-reduced part} of $x$.

It is easy to see, that  $\mathfrak{Ker}(\overline{f_e})=p^{e-1}
\mathbb{Z}_{p^e}G$ and
$\big(\mathfrak{Ker}(\overline{f_e})\big)^2=0$, so by (\ref{E:1}) and (\ref{E:2}) we obtain that
\[
\mathbb{Z}_{p^e}G\Big/(p^{e-1}\mathbb{Z}_{p^e}G)\cong  \mathbb{Z}_{p^{e-1}}G.
\]
Since  $p^{e-1}\mathbb{Z}_{p^e}G$ is a nilpotent ideal by Lemma \ref{L:5},
\[
U(\mathbb{Z}_{p^e}G)/(1+p^{e-1}\mathbb{Z}_{p^e}G)\cong U(\mathbb{Z}_{p^{e-1}}G).
\]
Clearly, $1 +p^{e-1}\mathbb{Z}_{p^e}G$ is an elementary abelian $p$-group of order
\[
\begin{split}
|1+p^{e-1}\mathbb{Z}_{p^e}G|=\textstyle\frac{|V(\mathbb{Z}_{p^{e}}G)|\cdot |U(\mathbb{Z}_{p^{e}})|}{|V(\mathbb{Z}_{p^{e-1}}G)|\cdot |U(\mathbb{Z}_{p^{e-1}})|}&\\
=\frac{ p^{e(|G|-1)}\cdot p} {p^{(e-1)(|G|-1)}}&=p^{|G|}.
\end{split}
\]
Furthermore, if  $u\in V(\mathbb{Z}_{p^e}G)=1+\omega(\mathbb{Z}_{p^e}G)$, then
\[
u=red_p(u)+p^{e-1}\sum_{g\in G}\beta_g(g-1),
\]
where  $red_p(u)=1+\sum_{g\in G}\alpha_g(g-1)$ is a unit and $0\leq \frak{r}(\alpha_g) < p^{e-1}$. It follows that
\begin{equation}\label{E:3}
u=red_p(u)(1+p^{e-1} z), \qquad \quad(z\in \omega(\mathbb{Z}_{p^e}G)).
\end{equation}

\begin{lemma}\label{L:6}
Let $\overline{f_e}: V(\mathbb{Z}_{p^e}G) \rightarrow V(\mathbb{Z}_{p^{e-1}}G)$ be the group homomorphism naturally obtained  from (\ref{E:1}).
Then $\frak{Ker}(\overline{f_e})=1+p^{e-1}\omega(\mathbb{Z}_{p^e}G)$ is an elementary abelian $p$-group of  order $p^{|G|-1}$ and
\begin{equation}\label{E:4}
V(\mathbb{Z}_{p^e}G)/\big(1+p^{e-1}\omega(\mathbb{Z}_{p^e}G)\big)\cong
V(\mathbb{Z}_{p^{e-1}}G).
\end{equation}
\end{lemma}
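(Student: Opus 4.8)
\textbf{Proof proposal for Lemma \ref{L:6}.}

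The plan is to identify the kernel of $\overline{f_e}$ explicitly and then compute its order and exponent using the machinery already in place. First I would note that $\overline{f_e}$ restricts to a homomorphism $V(\mathbb{Z}_{p^e}G)\to V(\mathbb{Z}_{p^{e-1}}G)$, since the augmentation of $\overline{f_e}(u)$ is the image of the augmentation of $u$ under $f_e$, hence $1$. A normalized unit $u$ lies in the kernel precisely when every coefficient of $u$, written as $\gamma_g=\alpha_g+p^{e-1}\beta_g$ with $0\le\frak{r}(\alpha_g)<p^{e-1}$ as in (\ref{E:2}), satisfies $\alpha_g=0$ for $g\neq 1$ and $\alpha_1$ congruent to $1$ mod $p^{e-1}$; in other words $u=1+p^{e-1}w$ for some $w\in\mathbb{Z}_{p^e}G$. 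Taking augmentations forces $p^{e-1}\varepsilon(w)=0$ in $\mathbb{Z}_{p^e}$, i.e. $\varepsilon(w)\equiv 0\pmod p$, and after absorbing a scalar we may take $w\in\omega(\mathbb{Z}_{p^e}G)$; this gives $\frak{Ker}(\overline{f_e})=1+p^{e-1}\omega(\mathbb{Z}_{p^e}G)$. The isomorphism (\ref{E:4}) is then immediate once surjectivity of $\overline{f_e}$ is established.

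For surjectivity, I would invoke Lemma \ref{L:5}: the ideal $p^{e-1}\mathbb{Z}_{p^e}G$ is nilpotent (its square is zero, as observed before the lemma), so $U(\mathbb{Z}_{p^e}G)\to U(\mathbb{Z}_{p^{e-1}}G)$ is onto with kernel $1+p^{e-1}\mathbb{Z}_{p^e}G$, and restricting to augmentation-$1$ elements, $\overline{f_e}$ maps $V(\mathbb{Z}_{p^e}G)$ onto $V(\mathbb{Z}_{p^{e-1}}G)$. Then (\ref{E:4}) follows from the first isomorphism theorem applied to $\overline{f_e}$.

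It remains to check that $1+p^{e-1}\omega(\mathbb{Z}_{p^e}G)$ is elementary abelian of order $p^{|G|-1}$. For the exponent: since $\big(p^{e-1}\mathbb{Z}_{p^e}G\big)^2=0$, for any $z\in\omega(\mathbb{Z}_{p^e}G)$ we have $(1+p^{e-1}z)^p=1+p\cdot p^{e-1}z=1+p^e z=1$, so every nontrivial element has order $p$; the group is clearly abelian, being a subgroup of the abelian group $V(\mathbb{Z}_{p^e}G)$. For the order, I would use the already-computed fact that $1+p^{e-1}\mathbb{Z}_{p^e}G$ has order $p^{|G|}$, together with the observation that the map $w\mapsto 1+p^{e-1}w$ descends to a bijection between $\mathbb{Z}_{p^e}G/p\mathbb{Z}_{p^e}G\cong\mathbb{F}_pG$ and $1+p^{e-1}\mathbb{Z}_{p^e}G$; under this correspondence $1+p^{e-1}\omega(\mathbb{Z}_{p^e}G)$ corresponds to the augmentation ideal $\omega(\mathbb{F}_pG)$, which has $\mathbb{F}_p$-dimension $|G|-1$, giving order $p^{|G|-1}$. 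Alternatively one divides $p^{|G|}$ by the index $|U(\mathbb{Z}_{p^e})\cap(1+p^{e-1}\mathbb{Z}_{p^e})|=p$ of the constant units. I expect the only mild subtlety to be the bookkeeping around which coefficient representatives to choose so that "kernel of $\overline{f_e}$" and "$1+p^{e-1}\omega$" match on the nose rather than merely up to a scalar unit; this is handled cleanly by the decomposition (\ref{E:2}) and the factorization (\ref{E:3}) established just above the lemma.
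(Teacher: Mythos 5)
Your proposal is correct and follows essentially the same route as the paper: both identify $\frak{Ker}(\overline{f_e})$ via the coefficient decomposition (\ref{E:2})--(\ref{E:3}), get (\ref{E:4}) from surjectivity plus the isomorphism theorem, and verify that the kernel is elementary abelian of order $p^{|G|-1}$. The only cosmetic difference is that you count the order by identifying $1+p^{e-1}\omega(\mathbb{Z}_{p^e}G)$ with $\omega(\mathbb{F}_pG)$, whereas the paper divides $|V(\mathbb{Z}_{p^{e}}G)|=p^{e(|G|-1)}$ by $|V(\mathbb{Z}_{p^{e-1}}G)|$ and then matches cardinalities; both are fine.
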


\begin{proof} Let $u\in V(\mathbb{Z}_{p^e}G)$. Then by (\ref{E:3}) we have that
\[
\overline{f_e}(u)=1+\sum_{g\in G}\big(\alpha_g+(p^{e-1})\big)(g-1)\in V(\mathbb{Z}_{p^{e-1}}G),
\]
so
$V(\mathbb{Z}_{p^e}G)/\big(1+p^{e-1} W\big)\cong
V(\mathbb{Z}_{p^{e-1}}G)$,
where $W\subseteq \omega(\mathbb{Z}_{p^e}G)$. It is easy to check that $1+p^{e-1} W$ is an elementary abelian $p$-group  of order
\[
\begin{split}
|1+p^{e-1}W|&=\textstyle\frac{|V(\mathbb{Z}_{p^{e}}G)|}{|V(\mathbb{Z}_{p^{e-1}}G)|}\\
&=\frac{ p^{e(|G|-1)}} {p^{(e-1)(|G|-1)}}=p^{|G|-1}.
\end{split}
\]
Clearly, \quad   $|p^{e-1}\omega(\mathbb{Z}_{p^e}G)|=|p^{e-1} W|=p^{|G|-1}$   and consequently
\[
1 +p^{e-1}W=1+p^{e-1}\omega(\mathbb{Z}_{p^e}G).
\]
The proof is complete.
\end{proof}

\section{Proof of the Theorems}

\begin{proof}[Proof of  Theorem 1] Use induction on $e$. The base of the induction is:   $e=2$.

Put $H=G[p]$. Any $u\in V(\mathbb{Z}_{p^e}G)[p]$ can be  written as
\[
u=c_1x_1+\cdots +c_tx_t,
\]
where $c_1,\ldots,c_t\in \mathfrak{R}_l(G/H)$ and $x_1,\ldots, x_t\in \mathbb{Z}_{p^2}H$.

First, assume that  $c_i\not\in H$ for any  $i=1,\ldots t$. Clearly,
\begin{equation}\label{E:5}
\overline{f_2}(u)= c_1\overline{f_2}(x_1)+c_2\overline{f_2}(x_2)+\cdots +c_t\overline{f_2}(x_t)\in V(\mathbb{Z}_pG).
\end{equation}
Since  $\overline{f_2}(u)\in 1+\frak{I}(H)$ (see Lemma \ref{L:4}), we have that  $c_j\in H$ for some $j$,  by (\ref{E:5}), a contradiction.

Consequently, we can assume that $c_1=1\in H$,   $x_1\not=0$ and $1\in Supp(x_1)$.
This yields that
\[
\begin{split}
\overline{f_2}(u)= &\overline{f_2}(x_1-\chi(x_1))+c_2\overline{f_2}(x_2-\chi(x_2))+\cdots +c_t\overline{f_2}(x_t-\chi(x_t))\\
+ &\overline{f_2}(\chi(x_1))+c_2\overline{f_2}(\chi(x_2))+\cdots +c_t\overline{f_2}(\chi(x_t))\in V(\mathbb{Z}_pG).
\end{split}
\]
Clearly,   either $\overline{f_2}(u)=1$ or $\overline{f_2}(u)$ has  order $p$.  Lemma \ref{L:4} ensures that
\[
\begin{split}
f_2(\chi(x_1))&\equiv 1\pmod{p},\qquad \text{and}\\
f_2(\chi(x_2))\equiv \cdots \equiv f_2(\chi(x_t))&\equiv 0\pmod{p}.\\
\end{split}
\]
It follows that  $u$  can be written  as
\begin{equation}\label{E:6}
u=1+\sum_{i=1}^tc_i\sum_{h\in G[p]}\beta_{h}^{(i)}(h-1)+pz, \qquad \quad(z\in \mathbb{Z}_{p^2}G).
\end{equation}
We can assume that $z=0$. By Lemma \ref{L:3}  we have that
\[
G=\mathfrak{D}_1(G)\supset \mathfrak{D}_2(G)=\mathfrak{D}_3(G)=\cdots =\mathfrak{D}_p(G)=G^{p^2},
\]
so (\ref{E:6}) can be rewritten  as
\[
u=1+\sum_{i=1}^tc_i\sum_{h\in G[p]\setminus \mathfrak{D}_p}\beta_{h}^{(i)}(h-1) +w,
\]
where $w\in \omega^2(\mathbb{Z}_{p^2}G)$. Then, by the binomial  formula,
\[
\begin{split}
1&=u^p\equiv 1+ p\sum_{i=1}^tc_i\sum_{h\in G[p]\setminus \mathfrak{D}_p}\beta_{h}^{(i)}(h-1)\\
&+\binom{p}{2}\Big(\sum_{i=1}^tc_i\sum_{h\in G[p]\setminus \mathfrak{D}_p}\beta_{h}^{(i)}(h-1)\Big)^2+
\cdots \quad \pmod{\omega^2(\mathbb{Z}_{p^2}G)}.
\end{split}
\]
It follows that
\[
p\sum_{i=1}^tc_i\sum_{h\in G[p]\setminus \mathfrak{D}_p}\beta_{h}^{(i)}(h-1)\equiv 0 \pmod{\omega^2(\mathbb{Z}_{p^2}G)}
\]
and\quad $\beta_{h}^{(i)}\equiv 0 \pmod{p^2}$\quad   for any \quad $h\in G[p]\setminus \mathfrak{D}_p$.
Hence by (\ref{E:6}),
\[
u=1+w, \qquad(w\in \omega^2(\mathbb{Z}_{p^2}G)).
\]
Again, by Lemma \ref{L:3},  we have that
\[
G^{p^2}= \mathfrak{D}_p(G)\supset \mathfrak{D}_{p+1}(G)=\mathfrak{D}_{p+2}(G)=\cdots =\mathfrak{D}_{p^2}(G)=G^{p^3}
\]
and (\ref{E:6}) can be rewritten  as
\[
u=1+\sum_{i=1}^tc_i\sum_{h\in \mathfrak{D}_{p}\setminus \mathfrak{D}_{p^2}}\beta_{h}^{(i)}(h-1) +w,
\]
where $w\in \omega^3(\mathbb{Z}_{p^2}G)$. This yields
\[
\begin{split}
1=u^p\equiv 1&+ p\sum_{i=1}^tc_i\sum_{h\in \mathfrak{D}_{p}\setminus \mathfrak{D}_{p^2}}\beta_{h}^{(i)}(h-1)\\  &+\binom{p}{2}\Big(\sum_{i=1}^tc_i\sum_{h\in \mathfrak{D}_{p}\setminus \mathfrak{D}_{p^2}}\beta_{h}^{(i)}(h-1)\Big)^2+
\cdots \pmod{\omega^3(\mathbb{Z}_{p^2}G)}.
\end{split}
\]
As before, it follows that
\[
p\sum_{i=1}^tc_i\sum_{h\in \mathfrak{D}_{p}\setminus \mathfrak{D}_{p^2}}\beta_{h}^{(i)}(h-1)\equiv 0 \pmod{\omega^3(\mathbb{Z}_{p^2}G)}
\]
and\quad $\beta_{h}^{(i)}\equiv 0 \pmod{p^2}$\quad for any\quad $h\in \mathfrak{D}_{p}\setminus \mathfrak{D}_{p^2}$. Therefore,
\[
u=1+w,\qquad\qquad   (w\in \omega^3(\mathbb{Z}_{p^2}G)).
\]
By continuing  this process  we obtain that $u=1+pv$, because the augmentation  ideal $\omega(\mathbb{Z}_{p^2}G)$ is nilpotent.

Now assume that  the statement of our lemma is true for $\mathbb{Z}_{p^{e-1}}G$.
This means  that for a unit  $u$ of the form (\ref{E:3}) we get
$\beta_{h}^{(i)}=p^{e-2}\alpha_{h}^{(i)}$ and
\[
u=1+\sum_{i=1}^tc_i\sum_{h\in G[p]}p^{e-2}\alpha_{h}^{(i)}(h-1),
\]
so
\[
1=u^p=1+p\sum_{i=1}^tc_i\sum_{h\in G[p]}p^{e-2}\alpha_{h}^{(i)}(h-1)
\]
and \quad $\alpha_{h}^{(i)}\equiv \;0\pmod{p}$. The proof is complete.
\end{proof}
\begin{lemma}\label{L:7}
Let $G$ be a finite abelian $p$-group. Then
\[
V(\mathbb{Z}_{p^e}G)=G\times \mathfrak{L}(\mathbb{Z}_{p^e}G)
\]
and  the following conditions hold:
\begin{itemize}
\item[(i)]  if $e\geq 2$, then  $\overline{f_e}\big(\mathfrak{L}(\mathbb{Z}_{p^e}G)\big)= \mathfrak{L}(\mathbb{Z}_{p^{e-1}}G)$;
\item[(ii)] if $e\geq 2$, then $\mathfrak{Ker}(\overline{f_e})=1+p^{e-1}\omega(\mathbb{Z}_{p^e}G)=\mathfrak{L}(\mathbb{Z}_{p^e}G)[p]$  and
\begin{equation}\label{E:7}
\mathfrak{L}(\mathbb{Z}_{p^e}G) /\big(1+p^{e-1}\omega(\mathbb{Z}_{p^e}G)\big)\cong \mathfrak{L}(\mathbb{Z}_{p^{e-1}}G);
\end{equation}
\item[(iii)] $\mathfrak{L}(\mathbb{Z}_{p^e}G)[p]\cong \mathfrak{L}(\mathbb{Z}_{p^{e-1}}G)[p]$ for $e\geq 3$.
\end{itemize}
\end{lemma}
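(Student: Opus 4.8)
The plan is to induct on $e$ and, simultaneously, to \emph{define} $\mathfrak{L}(\mathbb{Z}_{p^e}G)$ recursively: having fixed a complement $\mathfrak{L}(\mathbb{Z}_{p^{e-1}}G)$ of the trivial units $G$ in $V(\mathbb{Z}_{p^{e-1}}G)$, I would put
\[
\mathfrak{L}(\mathbb{Z}_{p^e}G):=\overline{f_e}^{-1}\big(\mathfrak{L}(\mathbb{Z}_{p^{e-1}}G)\big)\subseteq V(\mathbb{Z}_{p^e}G).
\]
With this choice property (i) is immediate from surjectivity of $\overline{f_e}$. So the genuine content splits into two parts: the base case $e=1$, that $G$ is a direct factor of $V(\mathbb{Z}_pG)=V(\mathbb{F}_pG)$, and the verification in the inductive step that the recursively defined $\mathfrak{L}(\mathbb{Z}_{p^e}G)$ is again a complement satisfying (ii)--(iii).

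For the base case I would show that $G$ is a \emph{pure} subgroup of the finite abelian $p$-group $V(\mathbb{F}_pG)$; since pure subgroups of finite abelian groups are direct factors, this yields $V(\mathbb{F}_pG)=G\times\mathfrak{L}(\mathbb{F}_pG)$ for any chosen complement. To check purity, let $g\in G$ with $g=u^{p^k}$ for some $u\in V(\mathbb{F}_pG)$; I must deduce $g\in G^{p^k}$. As $\mathbb{F}_pG$ has characteristic $p$, the Frobenius identity gives $g-1=u^{p^k}-1=(u-1)^{p^k}\in\omega^{p^k}(\mathbb{F}_pG)$, hence $g\in G\cap\big(1+\omega^{p^k}(\mathbb{F}_pG)\big)=\mathfrak{D}_{p^k}(\mathbb{F}_pG)$, which equals $G^{p^k}$ by Lemma~\ref{L:3} with $e=1$. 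Thus $G\cap V(\mathbb{F}_pG)^{p^k}=G^{p^k}$ for every $k$, as required.

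For the inductive step write $L:=\mathfrak{L}(\mathbb{Z}_{p^e}G)=\overline{f_e}^{-1}(\mathfrak{L}(\mathbb{Z}_{p^{e-1}}G))$. Since $\overline{f_e}$ restricts to an injection on the trivial units and $G\cap\mathfrak{L}(\mathbb{Z}_{p^{e-1}}G)=1$ by the inductive hypothesis, $G\cap L=1$; since $\overline{f_e}$ is onto, has kernel contained in $L$ (Lemma~\ref{L:6}), and $V(\mathbb{Z}_{p^{e-1}}G)=G\cdot\mathfrak{L}(\mathbb{Z}_{p^{e-1}}G)$, we get $GL=V(\mathbb{Z}_{p^e}G)$; hence $V(\mathbb{Z}_{p^e}G)=G\times L$ and (i) holds. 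For (ii): by Lemma~\ref{L:6} the subgroup $\mathfrak{Ker}(\overline{f_e})=1+p^{e-1}\omega(\mathbb{Z}_{p^e}G)$ is elementary abelian of order $p^{|G|-1}$ and is contained in $L$, hence in $L[p]$; but $V(\mathbb{Z}_{p^e}G)[p]=G[p]\times L[p]$ together with Theorem~\ref{T:1} gives $|L[p]|=|V(\mathbb{Z}_{p^e}G)[p]|/|G[p]|=p^{|G|-1}$, forcing $L[p]=1+p^{e-1}\omega(\mathbb{Z}_{p^e}G)$; the isomorphism~(\ref{E:7}) is then the first isomorphism theorem applied to $\overline{f_e}|_L$. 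Finally (iii): for $e\geq 3$ part (ii) applies to both $e$ and $e-1$, exhibiting $\mathfrak{L}(\mathbb{Z}_{p^e}G)[p]$ and $\mathfrak{L}(\mathbb{Z}_{p^{e-1}}G)[p]$ as elementary abelian $p$-groups of the same order $p^{|G|-1}$, hence isomorphic.

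The main obstacle I anticipate is the base case: splitting $G$ off $V(\mathbb{F}_pG)$, where the work is entirely carried by the purity criterion and the dimension subgroup identity $\mathfrak{D}_{p^k}(\mathbb{F}_pG)=G^{p^k}$. In the inductive step the one point deserving care is that $L[p]$ be \emph{equal} to $\mathfrak{Ker}(\overline{f_e})$, not merely isomorphic to it; this is exactly what the containment-plus-order argument against Theorem~\ref{T:1} supplies. Everything else is formal manipulation with $\overline{f_e}$.
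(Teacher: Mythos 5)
Your inductive step is essentially the paper's own argument: the paper also defines $\mathfrak{L}(\mathbb{Z}_{p^e}G)$ as $\overline{f_e}^{-1}\big(\mathfrak{L}(\mathbb{Z}_{p^{e-1}}G)\big)$ and verifies trivial intersection with $G$ and surjectivity onto $V(\mathbb{Z}_{p^{e-1}}G)$ together with $\mathfrak{Ker}(\overline{f_e})\subseteq G\cdot\mathfrak{L}(\mathbb{Z}_{p^e}G)$ in exactly the way you do. You diverge in two places, both to your credit. First, for the base case $e=1$ the paper simply cites Johnson's theorem that $G$ splits off $V(\mathbb{Z}_pG)$, whereas you reprove it via purity: $u^{p^k}=g$ forces $g-1=(u-1)^{p^k}\in\omega^{p^k}(\mathbb{F}_pG)$, so $g\in\mathfrak{D}_{p^k}(\mathbb{F}_pG)=G^{p^k}$ by Lemma~\ref{L:3} with $e=1$, and pure subgroups of finite abelian groups are direct summands. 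This is correct and self-contained (it is in essence Johnson's argument), at the cost of a little extra length. Second, for part (ii) the paper only says the isomorphism~(\ref{E:7}) holds ``by construction'' and does not really justify the equality $\mathfrak{Ker}(\overline{f_e})=\mathfrak{L}(\mathbb{Z}_{p^e}G)[p]$; your containment-plus-order count against Theorem~\ref{T:1} (namely $|\mathfrak{L}(\mathbb{Z}_{p^e}G)[p]|=|V(\mathbb{Z}_{p^e}G)[p]|/|G[p]|=p^{|G|-1}=|\mathfrak{Ker}(\overline{f_e})|$) supplies precisely the missing step. Part (iii) is handled identically in both proofs. No gaps.
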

\begin{proof}
If $e=1$, then there exists a subgroup $\mathfrak{L}(\mathbb{Z}_pG)$ of $V(\mathbb{Z}_pG)$ (see \cite{Johnson}, Theorem 3) such that
$V(\mathbb{Z}_pG)=G\times \mathfrak{L}(\mathbb{Z}_pG)$.

Assume
$V(\mathbb{Z}_{p^{e-1}}G)=G\times \mathfrak{L}(\mathbb{Z}_{p^{e-1}}G)$.
Consider the homomorphism
\[
\overline{f_e} : V(\mathbb{Z}_{p^e}G)\rightarrow V(\mathbb{Z}_{p^{e-1}}G)=G\times \mathfrak{L}(\mathbb{Z}_{p^{e-1}}G).
\]
Denote  the preimage of $\mathfrak{L}(\mathbb{Z}_{p^{e-1}}G)$ in  $V(\mathbb{Z}_{p^e}G)$ by  $\mathfrak{L}(\mathbb{Z}_{p^e}G)$.
Clearly,  $\overline{f_e}(g)=g$ for all $g\in G$  and
\[
\frak{Ker}(\overline{f_e})=1+{p^{e-1}}\omega(\mathbb{Z}_{p^e}G)\leq \mathfrak{L}(\mathbb{Z}_{p^e}G).
\]
If $x \in \mathfrak{L}(\mathbb{Z}_{p^e}G)\cap G$, then
\[
G\ni \overline{f_e}(x)\in  \mathfrak{L}(\mathbb{Z}_{p^{e-1}}G)\cap G=\gp{1},
\]
 so $x=1$.
Hence $\mathfrak{L}(\mathbb{Z}_{p^e}G)\cap G=\gp{1}$ and $G\times \mathfrak{L}(\mathbb{Z}_{p^e}G)\subseteq V(\mathbb{Z}_{p^e}G)$. Since
\[
\overline{f_e}(G\times \mathfrak{L}(\mathbb{Z}_{p^e}G))= V(\mathbb{Z}_{p^{e-1}}G)
\]
and $\frak{Ker}(\overline{f_e})\subseteq G\times \mathfrak{L}(\mathbb{Z}_{p^e}G)$, we have that
$V(\mathbb{Z}_{p^e}G)=G\times \mathfrak{L}(\mathbb{Z}_{p^e}G)$ by properties of the homomorphism.

(ii) Clearly the epimorphism  $\overline{f_e}$  $(e\geq 2)$ satisfies  (\ref{E:7}) by construction.

(iii) Let $e\geq 3$. From (ii) we have
\[
\mathfrak{Ker}(\overline{f_e})=1+p^{e-1}\omega(\mathbb{Z}_{p^e}G))=\mathfrak{L}(\mathbb{Z}_{p^e}G)[p]
\]
and $|1+p^{e-1}\omega(\mathbb{Z}_{p^e}G))|=p^{|G|-1}$ (see Lemma \ref{L:6}). It follows that
\[
|\mathfrak{L}(\mathbb{Z}_{p^e}G)[p]|=|\mathfrak{L}(\mathbb{Z}_{p^{e-1}}G)[p]|=p^{|G|-1},
\]
so the proof is finished.
\end{proof}

\begin{lemma}\label{L:8}
Let $e\geq 2$.
If $u\in \mathfrak{L}(\mathbb{Z}_{p^e}G)$, then
\begin{equation}\label{EE:8}
|u|=p\cdot |\overline{f_e}(u)|.
\end{equation}
\end{lemma}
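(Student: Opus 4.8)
The plan is to reduce everything to the identification, established in Lemma~\ref{L:7}(ii), of $\mathfrak{Ker}(\overline{f_e})$ with $\mathfrak{L}(\mathbb{Z}_{p^e}G)[p]$: on the subgroup $\mathfrak{L}(\mathbb{Z}_{p^e}G)$ the kernel of $\overline{f_e}$ is \emph{exactly} the set of elements of order dividing $p$. It is precisely here that passing from $V(\mathbb{Z}_{p^e}G)$ to $\mathfrak{L}(\mathbb{Z}_{p^e}G)$ matters, since on all of $V(\mathbb{Z}_{p^e}G)$ the kernel $1+p^{e-1}\omega(\mathbb{Z}_{p^e}G)$ is a proper subgroup of the $p$-torsion $G[p]\times(1+p^{e-1}\omega(\mathbb{Z}_{p^e}G))$ by Theorem~\ref{T:1}. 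With this in hand, (\ref{EE:8}) follows from the following elementary fact about abelian $p$-groups: if $\phi\colon A\to B$ is a homomorphism with $\mathfrak{Ker}(\phi)=A[p]$, then $|a|=p\,|\phi(a)|$ for every $1\ne a\in A$. I would apply it with $A=\mathfrak{L}(\mathbb{Z}_{p^e}G)$, $B=\mathfrak{L}(\mathbb{Z}_{p^{e-1}}G)$ and $\phi=\overline{f_e}$; this is legitimate because $\overline{f_e}$ maps $\mathfrak{L}(\mathbb{Z}_{p^e}G)$ onto $\mathfrak{L}(\mathbb{Z}_{p^{e-1}}G)$ (Lemma~\ref{L:7}(i)) and $\mathfrak{Ker}(\overline{f_e})\subseteq\mathfrak{L}(\mathbb{Z}_{p^e}G)$ (proof of Lemma~\ref{L:7}), so the kernel of the restriction is still $1+p^{e-1}\omega(\mathbb{Z}_{p^e}G)=\mathfrak{L}(\mathbb{Z}_{p^e}G)[p]$.

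For the elementary fact I would put $m=|\phi(a)|$. First, $m\mid|a|$ because $\phi$ is a homomorphism; and since $\phi(a^{m})=\phi(a)^{m}=1$ we get $a^{m}\in\mathfrak{Ker}(\phi)=A[p]$, so $a^{mp}=(a^{m})^{p}=1$ and $|a|\mid mp$. As $|a|$ and $m$ are powers of $p$, this leaves only $|a|=m$ or $|a|=mp$, and the point is to exclude the first. If $\phi(a)=1$ then $a\in\mathfrak{Ker}(\phi)=A[p]$, so $a^{p}=1$, and since $a\ne 1$ we get $|a|=p=mp$. If $\phi(a)\ne 1$ then $m\ge p$; assuming $|a|=m$, the element $v=a^{m/p}$ satisfies $v^{p}=a^{m}=1$, hence $v\in A[p]=\mathfrak{Ker}(\phi)$, so $1=\phi(v)=\phi(a)^{m/p}$ — impossible since $|\phi(a)|=m>m/p$. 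Hence $|a|=mp$ in every case, which is (\ref{EE:8}).

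I do not anticipate a genuine obstacle, since the substance of the lemma is carried entirely by Lemma~\ref{L:7}(ii), which is already available. The only points needing care are: (a) being explicit that (\ref{EE:8}) is meant for $u\ne 1$, the case $u=1$ giving the two sides $1$ and $p$; (b) the harmless passage from the kernel of $\overline{f_e}$ on $V(\mathbb{Z}_{p^e}G)$ to its kernel on $\mathfrak{L}(\mathbb{Z}_{p^e}G)$, legitimate because the former already lies inside the latter; and (c) the routine number-theoretic remark that $m\mid n$ together with $n\mid mp$ forces $n\in\{m,mp\}$ when $m$ and $n$ are powers of $p$.
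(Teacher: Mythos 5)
Your proposal is correct and rests on the same key fact as the paper's proof: that on $\mathfrak{L}(\mathbb{Z}_{p^e}G)$ the kernel of $\overline{f_e}$ is exactly the $p$-torsion (the paper invokes Theorem~\ref{T:1} directly and finishes with an induction on $|u|$; you invoke Lemma~\ref{L:7}(ii), which encodes the same information, and replace the induction by a short direct argument). Your write-up is in fact more explicit than the paper's one-line ``follows by induction'' about why the order cannot fail to drop, and your remark that the identity (\ref{EE:8}) must exclude $u=1$ is a correct observation about the statement as printed.
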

\begin{proof}
Let $|u|=p^{m}$. By Theorem  \ref{T:1} we obtain that  $u^{p^{m-1}}=1+p^{e-1}z$  for some $z\in \omega(\mathbb{Z}_{p^{e}}G)$, and
$\overline{f_e}(u^{p^{m-1}})=1$, so the statement follows by induction.
\end{proof}

\begin{lemma}\label{L:9}
Let  $d\geq 1$ and  $0\not=y\in \mathbb{Z}_{p^{e}}G$. Then $(1+p^dy)^{p^{e-d}}=1$ and the following conditions hold:
\begin{itemize}
\item[(i)] if  $p^{e-1}y\not=0$,  then  the unit $1+p^d y$ has order $p^{e-d}$, except  when
\[
p=2,\quad  d=1\quad  \text{and}\quad  y^2\not\in  2\mathbb{Z}_{2^e}G;
\]

\item[(ii)] if $p^{e-1}y=0$ then  $y=p^sz$, where $p^{e-1}z\not=0$, and  the unit $1+p^{d+s}z$ has order
$p^{e-d-s}$.
\end{itemize}
\end{lemma}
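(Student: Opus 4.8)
The plan is to expand $(1+p^dy)^{p^m}$ by the binomial theorem (legitimate since $\mathbb{Z}_{p^e}G$ is commutative) and to control the $p$-adic valuations of the summands by means of Lemma \ref{L:1}. Note first that $1+p^dy$ is a unit, since $(p^dy)^e=p^{de}y^e=0$. Writing $j=p^lk$ with $(k,p)=1$, Lemma \ref{L:1} shows that the summand $\binom{p^m}{j}p^{dj}y^j$ has $p$-adic valuation at least
\[
(m-l)+dj=m+\bigl(d(p^lk-1)-l\bigr).
\]
To prove $(1+p^dy)^{p^{e-d}}=1$ I take $m=e-d$ and check that $d(p^lk-1)-l\ge0$ for every $j=p^lk\ge1$; since $d\ge1$ and $k\ge1$ this follows from $p^l-1\ge l$, which holds for all $l\ge0$. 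Hence every summand with $j\ge1$ is divisible by $p^e$, i.e.\ vanishes in $\mathbb{Z}_{p^e}G$.

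For part (i) I would repeat the computation with $m=e-d-1$ (we may assume $d\le e-1$; the case $d=e$ is trivial). The $j=1$ summand equals $p^{e-1}y$, which is nonzero by hypothesis. For $j\ge2$ the summand has valuation at least $e-1+\bigl(d(p^lk-1)-l\bigr)$, and a short case check (on $l=0$, $l=1$, $l\ge2$) shows that, apart from the single exceptional case $p=2$, $d=1$, $j=2$, one has $d(p^lk-1)-l\ge1$, so that the summand vanishes in $\mathbb{Z}_{p^e}G$. Consequently, if $p>2$ or $d\ge2$, then $(1+p^dy)^{p^{e-d-1}}=1+p^{e-1}y\neq 1$, which together with the first assertion shows that $1+p^dy$ has order exactly $p^{e-d}$.

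It remains to treat $p=2$, $d=1$. If $e=2$, then $(1+2y)^{2^{e-d-1}}=1+2y\neq 1$, so the order is $2=2^{e-d}$. If $e\ge3$, the only extra surviving summand is the one for $j=2$; since $\binom{2^{e-2}}{2}\cdot 2^{2}=2^{e-1}(2^{e-2}-1)$ with $2^{e-2}-1$ odd, this summand equals $2^{e-1}y^2$, and therefore
\[
(1+2y)^{2^{e-2}}=1+2^{e-1}y+2^{e-1}y^2=1+2^{e-1}y(1+y),
\]
which equals $1$ exactly when $y^2+y\in 2\mathbb{Z}_{2^e}G$. Since $2^{e-1}y\neq0$ means precisely that $y\notin 2\mathbb{Z}_{2^e}G$, if in addition $y^2\in 2\mathbb{Z}_{2^e}G$ then $y^2+y\notin 2\mathbb{Z}_{2^e}G$, so the order is again $2^{e-1}=2^{e-d}$. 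Thus the order can fail to be $2^{e-d}$ only when $y^2\notin 2\mathbb{Z}_{2^e}G$, which is exactly the exception stated in the lemma (and it is a genuine exception: for $e\ge3$ the choice $y=1$ gives the unit $1+2y=3$, of order $2^{e-2}$ rather than $2^{e-1}$).

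For part (ii) I would write $y=p^sz$ with $s$ chosen maximal, so that $p^{e-1}z\neq0$; the hypothesis $p^{e-1}y=0$ forces $s\ge1$. Then $1+p^dy=1+p^{d+s}z$ with $d+s\ge2$, so the exceptional case of part (i) cannot occur, and part (i) applied to $1+p^{d+s}z$ gives order $p^{e-(d+s)}=p^{e-d-s}$. The main obstacle throughout is the bookkeeping in part (i): one must verify carefully that for $j=p^lk\ge2$ the inequality $d(p^lk-1)-l\ge1$ fails only when $p=2$, $d=1$, $j=2$, and then compute the residue of $\binom{2^{e-2}}{2}\cdot 2^{2}$ modulo $2^e$ precisely enough to isolate the correction term $2^{e-1}y(1+y)$ and read off the exact shape of the exception.
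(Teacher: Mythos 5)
Your proof is correct and follows essentially the same route as the paper: binomial expansion of $(1+p^dy)^{p^m}$ combined with the valuation estimate of Lemma~\ref{L:1} and the same case analysis on $l=v_p(j)$ to isolate the single surviving term. In fact your explicit treatment of the exceptional case $p=2$, $d=1$ --- computing $(1+2y)^{2^{e-2}}=1+2^{e-1}y(1+y)$, verifying the order is still $2^{e-1}$ when $y^2\in 2\mathbb{Z}_{2^e}G$, and exhibiting $y=1$ as a genuine exception --- is more complete than the paper's rather terse Case~3.
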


\begin{proof} Let $j=p^lk$ and  $(k,p)=1$. By Lemma \ref{L:1}, the number
$p^{e+(j-1)d-l}$ is the largest $p$-power divisor of  $\binom{p^{e-d}}{j} p^{jd}$\quad  for $j\geq 1$. Since
\[
\begin{split}
e-d-l+p^l kd&\geq e-d-l +p^ld\\
&=e+(p^l-1)d-l\geq e+p^l-1-l\geq e;\\
dp^{e-d}&\geq d+p^{e-d}\\
&\geq d+e-d\geq e,\\
\end{split}
\]
the number $p^e$ divides the natural  numbers  $\binom{p^{e-d}}{j}p^{jd}$ and  $p^{dp^{e-d}}$.
Using these inequalities, we have
\[
\textstyle
(1+p^{d}y)^{p^{e-d}}=1 +\sum_{j=1}^{p^{e-d}}\binom{p^{e-d}}{j}p^{jd}\cdot y^j+ p^{dp^{e-d}}\cdot  y^{p^{e-d}}=1.
\]
Therefore, the order of $1+p^{d}y$ is a divisor of  $p^{e-d}$.

Assume that $(1+p^{d}y)^{p^{e-d-1}}=1$. Since
\[
\begin{split}
dp^{e-d-1}&\geq d+p^{e-d-1}\\
&\geq d+1+(e-d-1)\geq e,\\
\end{split}
\]
we obtain that
\[
\begin{split}
\textstyle
(1+p^{d}y)^{p^{e-d-1}}&=1+ \sum_{j=1}^{p^{e-d-1}-1}\binom{p^{e-d-1}}{j}p^{jd}y^j+p^{dp^{e-d-1}}\cdot y^{p^{e-d-1}}\\
&=1+ \sum_{j=1}^{p^{e-d-1}-1}\binom{p^{e-d-1}}{j}p^{jd}y^j\\
&=1
\end{split}
\]
and \quad   $\sum_{j=1}^{p^{e-d-1}-1}\binom{p^{e-d-1}}{j}p^{jd}y^j=0$.\quad  This yields that
\begin{equation}\label{EE:9}
p^{e-1}y=-\binom{p^{e-d-1}}{2}p^{2d}y^2- \sum_{j=3}^{p^{e-d-1}-1}\binom{p^{e-d-1}}{j}p^{jd}y^j.
\end{equation}
Assume that $p^{e-1}y\not=0$. Since $j=p^lk$,   where $(k,p)=1$, the number
$p^{e+(j-1)d-1-l}$ is the largest $p$-power divisor of  $\binom{p^{e-d-1}}{j} p^{jd}$ for $j\geq 2$   by Lemma \ref{L:1}.
Put $m=(j-1)d-1-l$.

Consider the following cases:

\underline{Case 1.} Let $l=0$. Then  $m=(k-1)d-1-l$ and   $k\geq 2$, so  $m\geq 0$.

\underline{Case 2.} Let $l>1$. Then $j=p^lk\geq p^l\geq 4$ and
\[
\begin{split}
m&=(p^lk-1)d-1-l\\
&\geq (p^l-1)-1-l= p^l-2-l\\
&\geq(p^l+l)-l-2=p^l-2\geq 0.
\end{split}
\]

\underline{Case 3.} Let $l=1$. Then $pk>2$ unless  $p=2$ and $d=1$. If $p=2$ and $d=1$ we have
$m=(pk-1)2-2=2pk-4\geq 0$.

In all cases $m\geq 0$ unless  $p=2$, $d=1$ and  $y^2\not\in  2\mathbb{Z}_{2^e}G$.
Therefore
$p^{e+(j-1)d-1-l}\geq p^e$ and   by (\ref{EE:9}), we get  $p^{e-1}y= 0$, a contradiction.
Hence, the order of the unit $1+p^{d}y$ is $p^{e-d}$. The proof of  part (i) is finished.

If $p^{e-1}y=0$ then  $y=p^sz$, where $p^{e-1}z\not=0$, so by part (i),  the unit $1+p^{d+s}z$ has order
$p^{e-d-s}$.
\end{proof}
\begin{corollary}\label{C:1}
If $G=\gp{a\mid a^2=1}$  then
\[
V(\mathbb{Z}_{2^e}G)=G\times \gp{1+2(a-1)}\cong C_2\times C_{2^{e-1}}.
\]
\end{corollary}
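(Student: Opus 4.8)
The plan is to work in $\mathbb{Z}_{2^e}G$ with its $\mathbb{Z}_{2^e}$-basis $\{1,a\}$, so that $\omega(\mathbb{Z}_{2^e}G)=\mathbb{Z}_{2^e}(a-1)$ and
\[
V(\mathbb{Z}_{2^e}G)=1+\omega(\mathbb{Z}_{2^e}G)=\{\,1+x(a-1)\mid x\in\mathbb{Z}_{2^e}\,\}
\]
is a group of order $2^e$ (since $|G|=2$). Writing $u=1+2(a-1)$, it is enough to prove that the subgroups $G=\langle a\rangle$ and $\langle u\rangle$ of $V(\mathbb{Z}_{2^e}G)$ have trivial intersection and that $|\langle u\rangle|=2^{e-1}$; then $G\langle u\rangle$ has order $2\cdot 2^{e-1}=2^e$, hence equals $V(\mathbb{Z}_{2^e}G)$, and the decomposition is a direct product with the stated isomorphism type. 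Alternatively, $V(\mathbb{Z}_{2^e}G)=G\times\mathfrak{L}(\mathbb{Z}_{2^e}G)$ with $|\mathfrak{L}(\mathbb{Z}_{2^e}G)|=2^{e-1}$ by Lemma~\ref{L:7}, so $\langle u\rangle$ will be forced to be a complement.

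To compute $|\langle u\rangle|$, the key identity is $(a-1)^2=a^2-2a+1=-2(a-1)\in 2\mathbb{Z}_{2^e}G$. For $e=1$ one has $u=1$ and the assertion reduces to the trivial equality $V(\mathbb{Z}_2C_2)=G\cong C_2$, so I may assume $e\geq 2$. Then $2^{e-1}(a-1)\neq 0$, and the exceptional case of Lemma~\ref{L:9}(i), namely $p=2$, $d=1$ and $y^2\notin 2\mathbb{Z}_{2^e}G$, is ruled out by the identity above with $y=a-1$. Hence Lemma~\ref{L:9}(i) (with $d=1$) gives that $u$ has order exactly $2^{e-1}$, so $\langle u\rangle\cong C_{2^{e-1}}$.

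For the intersection, I would expand powers of $u$. From $(a-1)^j=(-2)^{j-1}(a-1)$ for $j\geq 1$ and the binomial theorem one obtains $u^k=1+c_k(a-1)$ with $c_k=\tfrac12\bigl(1-(-3)^k\bigr)\in\mathbb{Z}$. Since $-3\equiv 1\pmod 4$, the number $1-(-3)^k$ is divisible by $4$, so every $c_k$ is even. On the other hand $a=1+1\cdot(a-1)$ has odd coefficient of $a-1$; therefore $u^k\neq a$ for all $k$, and as $G=\{1,a\}$ this gives $\langle u\rangle\cap G=\langle 1\rangle$.

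Combining the two points, $G$ and $\langle u\rangle$ are subgroups of the abelian group $V(\mathbb{Z}_{2^e}G)$ with $\langle u\rangle\cap G=\langle 1\rangle$ and $|G|\cdot|\langle u\rangle|=2\cdot 2^{e-1}=2^e=|V(\mathbb{Z}_{2^e}G)|$, so $V(\mathbb{Z}_{2^e}G)=G\times\langle u\rangle\cong C_2\times C_{2^{e-1}}$. I do not anticipate a genuine obstacle, since the statement is essentially a direct reading of Lemma~\ref{L:9}; the one place needing an actual (small) computation is the parity of $c_k$, i.e.\ the verification that no power of $1+2(a-1)$ lands back in $G$.
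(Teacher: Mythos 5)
Your proof is correct and follows essentially the same route as the paper, whose entire argument is the identity $(a-1)^2=-2(a-1)$ combined with Lemma~\ref{L:9}(i) to get $|1+2(a-1)|=2^{e-1}$. Your additional verifications (the $e=1$ case, and the parity computation showing $\langle 1+2(a-1)\rangle\cap G=\langle 1\rangle$) simply make explicit what the paper leaves to the reader via Lemma~\ref{L:7} and the order count $|V(\mathbb{Z}_{2^e}G)|=2^{e}$.
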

\begin{proof}
 Indeed,  $(a-1)^2=-2(a-1)$, so     $|1+2(a-1)|=2^{e-1}$.
\end{proof}

\smallskip

\begin{proof}[Proof of  Theorem 2]
Let  $|V(\mathbb{Z}_p G)[p]| = p^{r}$ and   $\mathrm{exp}(G)=p^n$. Assume that
\begin{equation}\label{E:9}
V(\mathbb{Z}_{p}G)=\gp{b_1}\times\cdots\times\gp{b_r},
\end{equation}
where $|\gp{b_j}|=p^{c_j}$. The  number $r=\mathrm{rank}_p(V)$ is called the {\bf $p$-rank of}  $V(\mathbb{Z}_{p} G)$.
Obviously
\[
V(\mathbb{Z}_{p} G)[p]=\gp{b_1 ^{p^{c_1 -1}}}\times  \gp{b_2 ^{p^{c_2-1}} }\times \cdots\times \gp{b_r ^{p^{c_r -1}}}.
\]
Put $H=G[p]$. Since  $V(\mathbb{Z}_{p}G)[p]=1+\mathfrak I(H)$   (see Lemma \ref{L:4}),   $p^{r}$
equals  the number of  the elements of the ideal  ${\mathfrak I}(H)$. It is well known
(see \cite{Bovdi_book},  Lemma 2.2, p.7) that a basis of  ${\mathfrak I}(H)$ consists of
\[
\{u_i(h_j-1)\mid u_i\in \mathfrak{R}_l(G/H),\quad h_j\in H\setminus 1\}
\]
and the  number of such elements is
${|G|\over|H|}(|H|-1)=|G|-|G^p|$. Hence
\[
r=\mathrm{rank}_p(V)=|G|-|G^p|.
\]
Since    $V(\mathbb{Z}_{p} G)^p = V(\mathbb{Z}_{p} G^{p})$, we have
$\mathrm{rank}_p(V(\mathbb{Z}_{p} G)^p)=|G^p|-|G^{p^2}|$. It follows
that the number of cyclic subgroups of order $p$ in $V(\mathbb{Z}_{p} G)$ (see (\ref{E:9})) is
\[
(|G|-|G^p|)-(|G^p|-|G^{p^2}|)=|G|-2|G^p|+|G^{p^2}|.
\]
Repeating  this argument, one can easily  see that the number of  elements of order $p^i$ in $V(\mathbb{Z}_{p} G)$ is equal to
\begin{equation}\label{EE:11}
|G^{p^{i-1}}|-2|G^{p^{i}}|+|G^{p^{i+1}}|, \qquad\qquad (i=1,\ldots, n).
\end{equation}
Recall that   $V(\mathbb{Z}_{p}G)=G\times
\mathfrak{L}(\mathbb{Z}_{p}G)$ (see \cite{Johnson}, Theorem 3) is
a finite abelian $p$-group and $\mathfrak{L}(\mathbb{Z}_{p}G)$ has
a decomposition
\begin{equation}\label{EE:12}
\mathfrak{L}(\mathbb{Z}_{p}G)\cong \times_{d=1}^{n} s_{d} C_{p^d}\qquad \quad (s_d\in \mathbb{N}),
\end{equation}
where $\mathrm{rank}_p(\mathfrak{L}(\mathbb{Z}_{p}G))=r=s_1+\cdots+s_n$ and  $\mathrm{exp}(G)=p^n$. The number $s_i$ is equal to the difference of  (\ref{EE:11}) and the number of cyclic subgroups of order $p^i$ in the direct decomposition of the group $G$.

\smallskip

We use induction on $e\geq 2$ to prove that
\begin{equation}\label{EE:13}
\mathfrak{L}(\mathbb{Z}_{p^e}G)\cong  lC_{p^{e-1}}\times\Big(\times_{d=1}^{n} s_{d} C_{p^{d+e-1}}\Big),
\end{equation}
where $l=|G|-1-r$   and where $s_1, \ldots, s_n\in \mathbb{N}$ are from (\ref{EE:12}).

The base of the induction is:   $e=2$.  According to Lemma
\ref{L:7}, the kernel  of the epimorphism $\overline{f_e}$ is
$\frak{Ker}(\overline{f_e})=1+p\omega(\mathbb{Z}_{p^2}G)$, which
consists of all elements of order $p$ in
$\mathfrak{L}(\mathbb{Z}_{p^2}G)$ and
$|1+p\omega(\mathbb{Z}_{p^2}G)|=p^{|G|-1}$
by Lemma \ref{L:6}. Hence
\[
\mathrm{exp}(\mathfrak{L}(\mathbb{Z}_{p^2}G))=p\cdot \mathrm{exp}(\mathfrak{L}(\mathbb{Z}_{p}G)=p^{n+1}
\]
and the finite abelian $p$-group  $\mathfrak{L}(\mathbb{Z}_{p^2}G)$ has a decomposition
\[
\mathfrak{L}(\mathbb{Z}_{p^2}G)\cong lC_p  \times\Big(\times_{d=1}^{n} s_{d} C_{p^{d+1}}\Big),
\]
where $s_1, \ldots, s_n\in \mathbb{N}$  are from (\ref{EE:12}),  and where
$l=|G|-1-r$ by Lemma \ref{L:6}.

\smallskip
Assume that
\[
\mathfrak{L}(\mathbb{Z}_{p^{e-1}}G)\cong  lC_{p^{e-2}}\times\Big(\times_{d=1}^{n} s_{d} C_{p^{d+e-2}}\Big).
\]
Using   Lemma \ref{L:8}, we get
\[\mathrm{exp}(\mathfrak{L}(\mathbb{Z}_{p^e}G))=p\cdot \mathrm{exp}(\mathfrak{L}(\mathbb{Z}_{p^{e-1}}G))=p^{n+e-1}\]
and $\mathfrak{L}(\mathbb{Z}_{p^e}G)[p]\cong                                                                                                                                         \mathfrak{L}(\mathbb{Z}_{p^{e-1}}G)[p]$ with $e>2$, by Lemma \ref{L:7}(iii).
Now, again as before, we obtain  (\ref{EE:13}). The proof is complete.
\end{proof}

\newpage

\end{document}